 \newtheorem*{definition*}{Definition}
\newtheorem*{observation*}{Observation}
\newtheorem{conj}{Conjecture}[]
 \newtheorem{problem}[conj]{Problem}
\newtheorem*{lemma*}{Lemma}
\title{Saturated Partial Embeddings of Maximal Planar Graphs} 
\author{Alexander Clifton}{Center for Mathematical Modeling, University of Chile, Santiago, Chile}{yoa@ibs.re.kr}{}{Research supported in part by the Institute for Basic Science (IBS-R029-C1)}
\author{Dániel G. Simon}{HUN-REN Alfréd Rényi Institute of Mathematics, Budapest, Hungary \and Eötvös Loránd University, Budapest, Hungary} {dgsimon@renyi.hu}{https://orcid.org/0000-0002-3750-9666}{Research supported by ERC advanced grant no. 882971: Geoscape.}
\authorrunning{A. Clifton, D. G. Simon} 
\keywords{geometry, saturation, graph drawing, planarity, extremal combinatorics} 
\begin{document}

\maketitle
\begin{abstract}
We investigate two notions of saturation for partial planar embeddings of maximal planar graphs. Let $G = (V, E) $ be a vertex-labeled maximal planar graph  on $ n $ vertices, which by definition has $3n - 6$ edges. We say that a labeled plane graph $H = (V, E')$ with $E' \subseteq E$ is a \emph{labeled plane-saturated subgraph} of $G$ if no edge in $E \setminus E'$ can be added to $H$ in a manner that preserves vertex labels, without introducing a crossing. The \emph{labeled plane-saturation ratio} $lpsr(G)$ is defined as the minimum value of $\frac{e(H)}{e(G)}$ over all such $H$. We establish almost tight bounds for $lpsr(G)$, showing $lpsr(G) \leq \frac{n+7}{3n-6}$ for $n \geq 47$, and constructing a maximal planar graph $G$ with $lpsr(G) \geq \frac{n+2}{3n-6}$ for each $n\ge 5$.

Dropping vertex labels, a \emph{plane-saturated subgraph} is defined as a plane subgraph $H\subseteq G$ where adding any additional edge to the drawing either introduces a crossing or causes the resulting graph to no longer be a subgraph of $G$. The \emph{plane-saturation ratio} $psr(G)$ is defined as the minimum value of $\frac{E(H)}{E(G)}$ over all such $H$. For all sufficiently large $n$, we demonstrate the existence of a maximal planar graph $G$ with $psr(G) \geq \frac{\frac{3}{2}n - 3}{3n - 6} = \frac{1}{2}$.

    
\end{abstract}

\section{Introduction}

A graph is called planar, if it can be embedded in $\mathbb{R}^2$ such that each vertex is mapped to a different point, and each edge is mapped to a simple topological curve, where no two curves may intersect, overlap or touch each other except by possibly sharing their common endpoints. A plane graph is a planar graph together with a specific plane-embedding.

This mapping is not always straightforward. When drawing the edges of the planar graph one by one, at some point it can become impossible to draw new edges of the graph without creating a crossing. The simplest example of this phenomenon is the graph which consists of $5$ vertices, out of which $3$ form a triangle, and the other $2$ form an edge. If the triangle is drawn first and the endpoints of the extra edge are placed inside and outside the triangle, respectively, it becomes impossible to complete the drawing without introducing a crossing. See Figure \ref{fig:5csucs}. We will call this drawing $H$ a \textbf{plane-saturated subgraph} of $G$, properly defined later. Our work focuses on identifying the optimal maximal planar graphs in which any saturated subdrawing retains a large number of edges, ensuring that the graph cannot easily be drawn in a ''bad'' non-planar manner.

Saturation problems have been studied by mathematicians since the mid-20th century. For any graph $H$, the saturation number $sat(n,H)$ denotes the minimum number of edges in an $n$-vertex graph that contains no subgraph isomorphic to $H$, but for which adding any extra edge creates a copy of $H$. The topic was first studied by Zykov \cite{zykov} in 1949, and in 1964, the exact value of $sat(n,K_t)$ was shown by Erdős, Hajnal and Moon \cite{erdos}. Despite significant progress, many open questions remain. For example, if $C_l$ denotes a cycle of length $l$, the exact value of $sat(n,C_l)$ is known only for $l\le5$. Specifically, case $l=3$ follows from \cite{erdos}; $l=4$ was resolved by Ollmann in 1972 \cite{ollmann}; and $l=5$ was addressed with an exact upper bound by Fisher, Fraughnaugh, and Langley in 1995 \cite{fisher} and a matching lower bound by Chen in two subsequent papers from 2009 and 2011 \cite{chen1, chen2}. For larger values of $l$, Füredi and Kim conjectured in \cite{furedi} that their upper bound $sat(n,C_l)=(1+\frac{1}{l-4})n+O(l^2)$ is exact, but so far this conjecture remains open.

For general graphs $H$, the best known upper bound on $sat(n,H)$ was established by Faudree and Gould in 2013 \cite{faudree2}, while the first general lower bound was provided by Cameron and Puleo in 2022 \cite{cameron}. For a comprehensive overview of saturation problems, the survey by Faudree, Faudree, and Schmitt \cite{faudree} is recommended.

 Our research on saturation problems in plane drawings of graphs was heavily inspired by the recent work of Clifton and Salia from 2024 \cite{clifton}. Their research, in turn, was motivated by earlier studies on saturation problems for simple topological drawings of graphs, conducted by Kynčl, Pach, Radoičič, and Tóth \cite{kyncl}, as well as others \cite{brass, fulek, pach, suk}. This area has been attracting increasing attention; before submission, another research group \cite{barat} informed us that they are also working on the plane-saturation problem addressed in this paper.

\begin{figure}
    \centering
    \includegraphics[scale=0.8]{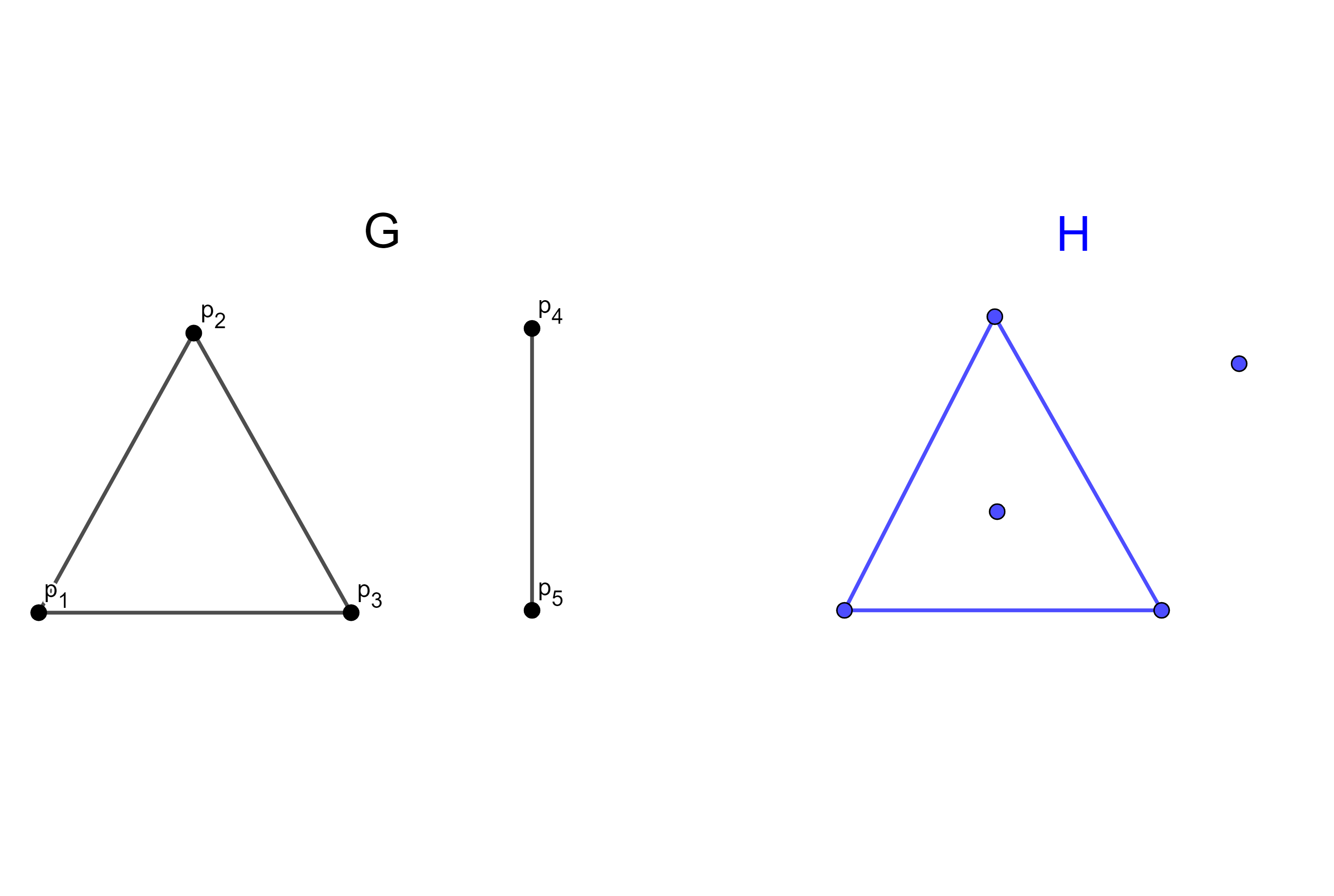}
    \caption{For the graph G on the left side in black, the plane graph H on the right in blue is a plane-saturated subgraph of G. If we add labels to the blue vertices, it would also be a labeled plane-saturated subgraph of G.}
    \label{fig:5csucs}
\end{figure}

Throughout the paper, we let $G=(V,E)$ be a maximal planar graph on $n$ vertices and $3n-6$ edges. We introduce two properties of $G$: the \emph{labeled plane-saturation ratio}, denoted by $lpsr(G)$, and the \emph{plane-saturation ratio}, denoted by $psr(G)$. The plane-saturation ratio was first introduced by Clifton and Salia in \cite{clifton}, while the labeled plane-saturation ratio is a new definition.

\begin{definition*}
    A plane graph $H=(V',E')$ with labeled vertices is called a \textbf{labeled plane-saturated subgraph} of $G=(V,E)$ if: \begin{itemize}
        \item $V'=V$,
        \item $E'\subseteq E$,
        \item adding any edge to $H$ either introduces a crossing in the drawing, or results in $E'$ no longer being a subset of $E$.
    \end{itemize}
     The \textbf{labeled plane-saturation ratio} of a graph $G$, denoted by $lpsr(G)$, is defined as:
     
     \centering
    $lpsr(G)=\min\{\frac{e(H)}{e(G)}\mid$$ H$ is a labeled plane-saturated subgraph of G$\}$,\\
    where $e(H)$ and $e(G)$ denote the number of edges in $H$ and $G$, respectively. 
\end{definition*}.

\begin{definition*}
    A plane graph $H$ is called a \textbf{plane-saturated subgraph} of $G$ if:
    \begin{itemize}
        \item $H$ is a subgraph of $G$ with the same number of vertices,
        \item adding any edge to $H$ either introduces a crossing, or results in $H$ no longer being a subgraph of $G$.
    \end{itemize} 
     The \textbf{plane-saturation ratio} of a graph $G$, denoted by $psr(G)$, is defined as: \\ \centering
    $psr(G)=\min\{\frac{e(H)}{e(G)}\mid$$ H$ is a plane-saturated subgraph of G$\}$,\\
    where $e(H)$ and $e(G)$ denote the number of edges in $H$ and $G$, respectively. 
\end{definition*}.

At first it might be difficult to see the subtle difference between the two definitions. It is easy to see that every plane-saturated subgraph $H$ of $G$ can be labeled to give a labeled plane-saturated subgraph of $G$. Since $H$ is a subgraph of $G$, by definition there exists at least one valid vertex labeling of $H$ such that $E(H)\subseteq E(G)$. Moreover, by definition of plane-saturation, no new edges can be added to the drawing of $H$ without adding a crossing or violating the property that the graph is a subgraph of $G$, thus making the labeled $H$ labeled plane-saturated.

On the other hand, there exist labeled plane-saturated subgraphs that are not plane-saturated. An example illustrating the difference is shown in Figure \ref{fig:not_plane-sat}.

The following observation is a simple consequence of the previous paragraphs.
\begin{observation*}
    For any planar graph $G$, $lpsr(G)\leq psr(G)$.
\end{observation*}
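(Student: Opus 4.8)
The plan is to exhibit, for the plane-saturated subgraph $H$ of $G$ that witnesses the minimum in the definition of $psr(G)$, a vertex labeling that turns $H$ into a labeled plane-saturated subgraph of $G$ with exactly the same number of edges. Since $lpsr(G)$ is a minimum of the ratio $e(H)/e(G)$ taken over a family of labeled configurations that includes this one, this immediately gives $lpsr(G) \le e(H)/e(G) = psr(G)$. In other words, the content of the observation is precisely the easy direction already sketched in the paragraph preceding it, and the proof simply formalizes that sketch.

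Concretely, I would first fix a plane-saturated subgraph $H$ of $G$ with $e(H)/e(G) = psr(G)$. Because $H$ is a subgraph of $G$ on the same number of vertices, there is at least one bijection $\varphi \colon V(H) \to V(G)$ under which every edge of $H$ maps to an edge of $G$; fix such a $\varphi$ and use it to transport the given plane drawing of $H$ to a plane graph $H' = (V, E')$ with $E' \subseteq E$, where the underlying drawing is unchanged and only the vertex names are altered. It then remains to verify the third bullet in the definition of labeled plane-saturated subgraph for $H'$. Take any edge $e = uv$ with $u, v \in V$ and $e \notin E'$. If $e$ cannot be drawn in the plane drawing of $H'$ without creating a crossing, the first alternative of the bullet holds. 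Otherwise $H' + e$ is again a plane graph, and transporting back along $\varphi^{-1}$ shows that $H$ together with the corresponding non-edge admits a crossing-free drawing; plane-saturation of $H$ then forces $H$ plus that edge to fail to be a subgraph of $G$, so $e \notin E$ and hence $E' \cup \{e\} \not\subseteq E$, which is the second alternative of the bullet. Thus $H'$ is a labeled plane-saturated subgraph of $G$ with $e(H') = e(H)$.

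I do not expect a genuine obstacle here; the only step that warrants a moment's care is checking that "adding a non-edge without a crossing" is preserved in both directions under the relabeling $\varphi$, but this is immediate since $\varphi$ merely renames vertices and leaves the planar drawing itself untouched, so a crossing-free extension of the labeled drawing corresponds bijectively to a crossing-free extension of the original drawing. Combining the displayed inequality $lpsr(G) \le e(H')/e(G) = e(H)/e(G)$ with the choice of $H$ as a minimizer for $psr(G)$ completes the argument.
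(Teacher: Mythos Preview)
Your proposal is correct and follows essentially the same approach as the paper: the paper's justification is precisely the paragraph preceding the observation, which argues that any plane-saturated subgraph $H$ admits a labeling (coming from an embedding of $H$ into $G$) under which it becomes labeled plane-saturated, and your proof is a careful formalization of that argument.
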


\begin{figure}
    \centering
    \includegraphics[scale=0.8]{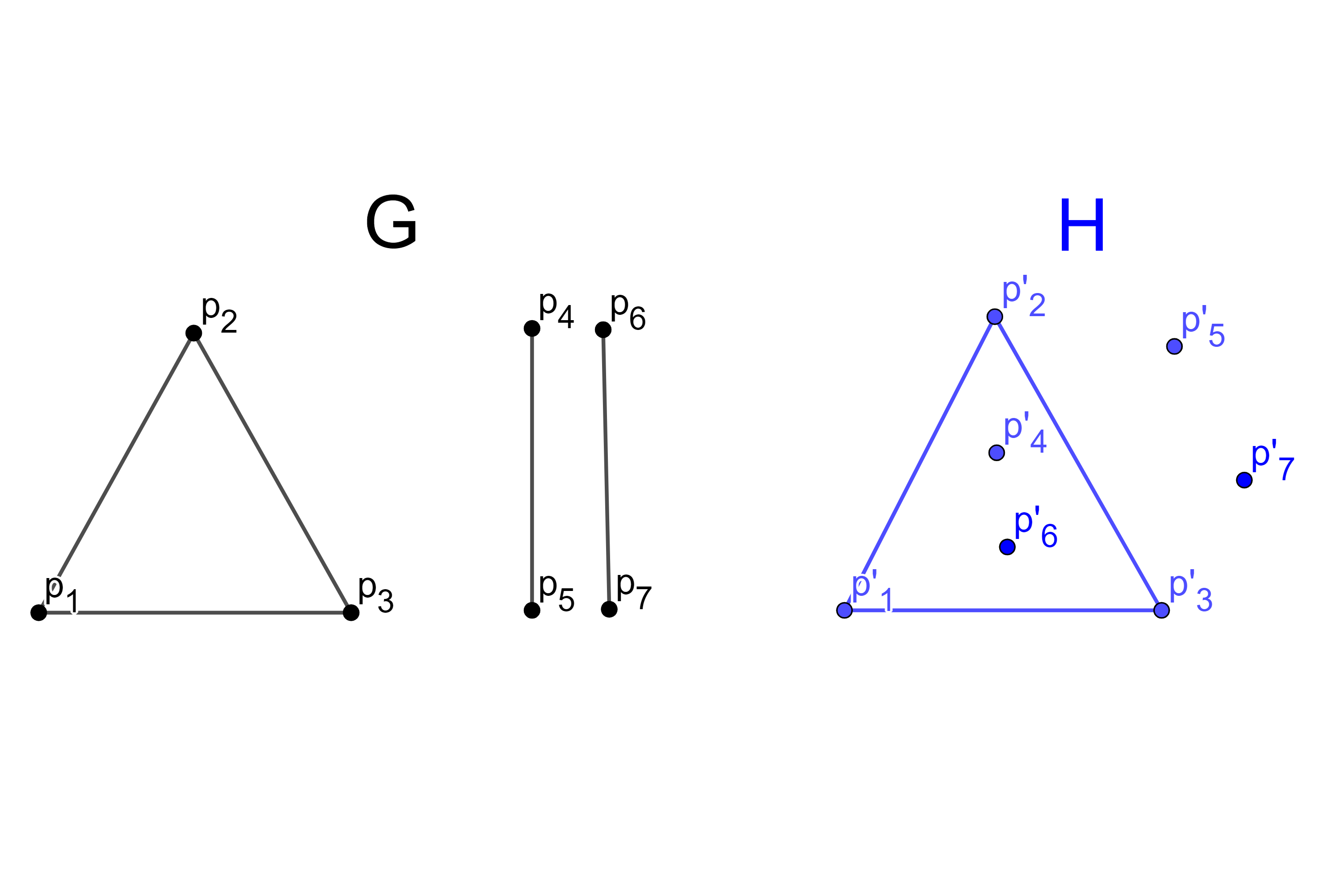}
    \caption{For the graph G on the left side in black, the plane graph H on the right in blue is a labeled plane-saturated subgraph of G. If we remove the labels of the blue vertices, it is \textbf{not} a plane-saturated subgraph of G, as we can switch the roles of $p'_5$ and $p'_6$ to add 2 more edges.}
    \label{fig:not_plane-sat}
\end{figure}

We investigate the values of $lpsr(G)$ and $psr(G)$ for maximal planar graphs $G$. The main contributions of this paper are summarized in the following theorems. Below, we present our first two results concerning the labeled plane-saturation ratio:

\begin{theorem}\label{lower}
For $n\ge 5$, there exists a maximal planar graph $G$ with $n$ vertices such that $lpsr(G)\geq \frac{n+2}{3n-6}$.
\end{theorem}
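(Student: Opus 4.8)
The plan is to take, for $n\ge 5$, the maximal planar graph $G=G_n$ obtained from a path $v_1v_2\cdots v_{n-2}$ by adding two adjacent vertices $a,b$, each joined to every $v_i$; this has $(n-3)+2(n-2)+1=3n-6$ edges, the two ``apex'' vertices $a,b$ have degree $n-1$, and for $n=5$ it is the $K_5$ minus an edge of Figure~\ref{fig:5csucs}. Everything rests on a reformulation of saturation, which I would establish first: a plane subgraph $H$ of $G$ on the full vertex set is labeled plane-saturated exactly when, for every face $f$ of $H$, every edge of $G$ joining two vertices incident to $f$ already lies in $H$; equivalently $E(H)=\bigcup_f E\big(G[V(f)]\big)$, the union over faces $f$ of $H$, with $V(f)$ the set of vertices on the boundary of $f$. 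Fed into Euler's formula in the form $e(H)=|F(H)|+(n-1)-c$ (with $c$ the number of components of $H$ and $|F(H)|$ the number of faces), this reduces Theorem~\ref{lower} to proving that every saturated $H\ne G$ has at least four faces when $H$ is connected, and at least $c+3$ faces in general.

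I would first record two consequences of the reformulation, particular to $G_n$. Since each of $a,b$ is $G$-adjacent to all other vertices, any face $f$ of a saturated $H$ containing $a$ has $V(f)\subseteq N_H(a)\cup\{a\}$, so $|V(f)|\le\deg_H(a)+1$, and similarly for $b$; hence a saturated $H\ne G$ has no face carrying all $n$ vertices and no isolated apex (an isolated apex could be joined to any other vertex on its face), and --- since $G_n-a-b$ is just a path, which cannot on its own bound a face of a larger plane graph --- every face of $H$ is incident to $a$ or to $b$. This settles $|F(H)|\le 2$. If $|F(H)|=1$ then $H$ is a forest and its single face carries every vertex, which is impossible. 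If $|F(H)|=2$ then $H$ is connected and unicyclic with a unique cycle $C$; saturation would force any $G$-chord of $C$ into $H$, so $C$ is chordless, hence a triangle (the only chordless cycles of $G_n$), and each of the two faces $f$ has $G[V(f)]=H[V(f)]$ with $H[V(f)]$ a cycle plus attached trees, so $|E(G[V(f)])|=|V(f)|$. But one of the two faces contains an apex together with two path vertices of $C$, and that apex being $G$-adjacent to everything on the face makes $|E(G[V(f)])|>|V(f)|$ --- unless the other face carries all of $V$, which is likewise ruled out.

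The crux, and the step I expect to be the main obstacle, is excluding $|F(H)|=3$ for connected $H$: that $H$ could be a ``bicyclic'' plane graph --- a theta graph, two cycles sharing a vertex, or two cycles joined by a path, each with trees attached --- with exactly $n+1$ edges. Here I would again use $E(H)=\bigcup_{i=1}^{3}E(G[V(f_i)])$ together with $|V(f_i)|\le\deg_H(a)+1$, resp.\ $\le\deg_H(b)+1$, for the faces containing $a$, resp.\ $b$. As the three faces cover $V$ and each is incident to an apex, pigeonhole forces $\{a,b\}\cup N_H(a)\cup N_H(b)=V$, so $\deg_H(a)+\deg_H(b)\ge n-2$ and $e(H)$ is already pinned near $n$; playing this against the path edges that saturation forces into $H$ (a face carrying two consecutive path vertices must contain the edge between them) and tracking how $v_1,\dots,v_{n-2}$ distribute over the three faces relative to $a$ and $b$ should show that either some face would carry all of $V$ (forcing $H=G$) or the forced edges exceed $n+1$. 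The disconnected case is easier: a saturated $H$ has no isolated vertex, so each component carries an edge, and $e(H)=|F(H)|+n-1-c$ forces $|F(H)|\ge c+3$, which the same bookkeeping contradicts. Finally a few small $n$ --- where $G_n$ has no proper saturated subgraph, so $lpsr(G_n)=1$ --- are checked directly; for all remaining $n$ this gives $e(H)\ge n+2$ for every saturated $H$, i.e.\ $lpsr(G_n)\ge\frac{n+2}{3n-6}$.
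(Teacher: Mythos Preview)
Your face-counting reformulation is clean and the Euler-formula reduction to $|F(H)|\ge c+3$ is correct; the cases $|F(H)|\le 2$ go through essentially as you say. But the proposal has a genuine gap precisely where you flag it: the three-face case. You write that the bookkeeping ``should show'' that the forced edges exceed $n+1$, but you never carry this out, and it is exactly here that the paper spends most of its effort. In the paper's graph the two apex vertices are \emph{non}-adjacent (and the remaining $n-2$ vertices form a \emph{cycle}, not a path), and the three-face analysis repeatedly exploits this: when the two cycles $C_1,C_2$ of a three-face $H$ share a path $P$, the paper shows the endpoints of $P$ must be the two apices, and then uses their non-adjacency to force an intermediate vertex on $P$ and derive a contradiction. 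Your graph has $a,b$ adjacent, so this step is unavailable; indeed the edge $ab$ can itself serve as the shared path of a theta, and you would need a genuinely different argument to exclude, say, an $H$ consisting of a full star at $a$ together with two further edges from $b$---a configuration with exactly three faces and $n+1$ edges. It \emph{can} be excluded, but not by the loose inequality $\deg_H(a)+\deg_H(b)\ge n-2$ you cite, which such an $H$ already saturates.

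Two smaller points. First, your witness graph is not the paper's graph (for $n\ge 6$ the degree sequences differ: your apices have degree $n-1$ and two path-ends have degree $3$, whereas in the paper every non-apex vertex has degree $4$); your reference to Figure~\ref{fig:5csucs} is also off---that figure depicts $K_3\cup K_2$, not $K_5$ minus an edge. Second, the disconnected case is not ``easier'' in the way you suggest: a disconnected $H$ can have $|F(H)|=2$ (a triangle plus an isolated edge, say), and your one-line dismissal does not rule out $|F(H)|\le c+2$ for $c\ge 2$. The paper handles this by first proving that any saturated $H$ must be connected, again using the specific structure of its graph. So as written, the proposal is a reasonable outline with the same overall architecture as the paper's proof, but the decisive three-face step is missing, and because you chose a different witness graph you cannot simply import the paper's argument for it.
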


\begin{theorem}\label{upper}
    Let $G$ be a maximal planar graph $G$ on $n\geq47$ vertices. Then,
  $lpsr(G)\leq \frac{n+7}{3n-6}$.
\end{theorem}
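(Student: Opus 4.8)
The plan is to show that every triangulation $G$ admits a spanning plane subgraph $H$ with at most $n+7$ edges that is labeled-plane-saturated. The first step is to record the combinatorial reformulation: a spanning plane subgraph $H\subseteq G$ is labeled-plane-saturated if and only if, in the chosen drawing of $H$, for every face $F$ every edge of $G$ whose two endpoints are incident to $F$ already belongs to $H$ (a vertex sitting isolated inside $F$ counts as incident to $F$). Indeed, a new edge added to a plane graph without a crossing must stay inside one face and hence join two vertices incident to it. So the goal becomes: find a sparse $H$, together with an embedding, all of whose faces are ``clean'' in this sense.

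The construction I would try is a skeleton-plus-debris one. Pick a set $W\subseteq V$ with $|W|\le (n+13)/3$, let $H$ contain all of $G[W]$ drawn in a carefully chosen embedding, and insert each remaining vertex $x\in V\setminus W$ as an \emph{isolated} vertex into some face of $G[W]$ whose boundary is disjoint from $N_G(x)$, arranging that no two $G$-adjacent vertices of $V\setminus W$ land in the same face. The faces of $H$ are then exactly the faces of $G[W]$ with isolated points dropped in, and each is clean: a face meeting $W$ in a set $S$ has $G[S]\subseteq G[W]\subseteq H$, an isolated $x$ inside it has no $G$-neighbour on the boundary by construction, and two isolated vertices in a common face are non-adjacent in $G$. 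Since $H$ then has $e(G[W])\le 3|W|-6\le n+7$ edges, this would finish the proof.

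Two things must be arranged. First, the drawing of $G[W]$ must have \emph{enough} faces to receive all of $V\setminus W$: placing one vertex per face essentially forces $G[W]$ to be close to a triangulation on $\sim n/3$ vertices, which need not exist (e.g. when $G$ is $K_4$-subgraph-free), so instead I would allow several mutually non-adjacent vertices per face and invoke the Four Colour Theorem to $4$-colour $G-W$, sending the vertices of any one face from a single colour class so that independence is automatic. Second, and this is the crux, for each $x\in V\setminus W$ there must be a face of $G[W]$ avoiding $N_G(x)$, and these choices must be jointly realizable (a Hall-type condition). A sparse skeleton has large, boundary-heavy faces, so a single neighbourhood can block many faces; to control this I would build $W$ greedily starting from low-degree vertices --- a triangulation always has several vertices of degree at most $5$ and one can peel such vertices --- so that each vertex outside $W$ has few neighbours inside $W$ and blocks few faces, adding a bounded number of auxiliary edges (absorbed by the slack ``$+7$'') to guarantee the face count. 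The hypothesis $n\ge 47$ is precisely where this Hall-type counting for the distribution becomes satisfiable.

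The main obstacle is reconciling these two requirements: forcing $e(G[W])\le n+7$ keeps the skeleton sparse and hence gives it big, boundary-heavy faces, which fights against needing many faces each reachable by the right debris vertices. Threading this needle with a single choice of $W$ (plus a few auxiliary edges and the $4$-colouring) and then verifying the matching between $V\setminus W$ and the faces of the skeleton is the technical heart of the argument.
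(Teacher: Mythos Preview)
Your plan has a genuine gap at exactly the point you flag as ``the crux'': the requirement that every $x\in V\setminus W$ can be dropped into a face of $G[W]$ whose boundary misses $N_G(x)$ can fail outright, not merely be delicate. Take the double wheel $G$ from Section~\ref{sec2} (hubs $v_1,v_2$ each adjacent to all of $v_3,\dots,v_n$, which form a cycle). Both hubs must lie in $W$, since each is adjacent to every other vertex and hence cannot sit isolated in any nonempty face. But once $v_1,v_2\in W$ and $|W|<n$, the graph $G[W]$ is $2$-connected and every cycle in it uses $v_1$ or $v_2$ (because $G[W\setminus\{v_1,v_2\}]$ is a proper subgraph of $C_{n-2}$ and therefore a forest). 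Hence every face boundary of \emph{any} plane drawing of $G[W]$ contains $v_1$ or $v_2$, and since every $x\in V\setminus W$ is adjacent to both hubs, there is no admissible face for $x$. Your Hall-type condition is not tight here; it is vacuously violated. Building $W$ from low-degree vertices, as you suggest, does not escape this: the low-degree vertices are $v_3,\dots,v_n$, and adding them to $W$ never creates a cycle avoiding the hubs until $W$ swallows all of them.

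The paper's proof keeps the four-colouring idea but abandons the attempt to isolate the debris. It first shows (by a short counting argument using $K_{3,3}$-freeness, which is where $n\ge 47$ enters) that $G$ has a vertex $v$ of degree $k$ with $4\le k<n/2$. The skeleton is then just the $k$-wheel on $v$ and its neighbours; the remaining vertices are $4$-coloured and the four classes are placed in four of the $k$ triangular faces. Crucially, edges from a class vertex to the two rim vertices bounding its face are \emph{allowed} rather than avoided; an averaging over the $k$ cyclic rotations of the class placement shows that some rotation incurs at most $\tfrac{2}{k}(3n-6-2k)$ such edges, giving a saturated drawing with at most $2k+\tfrac{6n-12}{k}-4$ edges, which is $\le n+7$ for $7\le k<n/2$; sharper $K_{3,3}$-based counts handle $k=4,5,6$. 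The conceptual fix you need is exactly this: accept and bound the skeleton-to-debris edges instead of trying to eliminate them.
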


This means that up to an additive constant of $3$, we have determined the maximal size of a labeled plane-saturated subgraph of a maximal planar graph on $n$ vertices. For $n\leq 46$, our proof of Theorem~\ref{upper} does not work. If $n$ tends to infinity, there is an immediate corollary of the above theorems:

\begin{corollary}
    The minimum value of $lpsr(G)$ among all maximal planar graphs $G$ with $n$ vertices converges to $\frac13$ as $n$ goes to infinity.
\end{corollary}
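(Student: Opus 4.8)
The corollary follows immediately by combining the two theorems and taking a limit. The plan is as follows.

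Let $m(n)$ denote the minimum of $lpsr(G)$ over all maximal planar graphs $G$ on $n$ vertices; I want to show $m(n)\to \frac13$ as $n\to\infty$. First I would establish the upper bound on the limit: by Theorem~\ref{upper}, for every $n\ge 47$ and every maximal planar $G$ on $n$ vertices we have $lpsr(G)\le \frac{n+7}{3n-6}$, hence $m(n)\le \frac{n+7}{3n-6}$, and this quantity tends to $\frac13$ as $n\to\infty$ since $\frac{n+7}{3n-6}=\frac13\cdot\frac{1+7/n}{1-6/n}$. Next I would establish the lower bound on the limit: by Theorem~\ref{lower}, for every $n\ge 5$ there is at least one maximal planar graph $G$ on $n$ vertices with $lpsr(G)\ge \frac{n+2}{3n-6}$, but this only bounds that particular graph, not all of them. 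However, $lpsr$ is bounded below uniformly: for any maximal planar $G$, any labeled plane-saturated subgraph $H$ must in particular be connected and span all $n$ vertices (an isolated or separated vertex could always be joined by some edge of $G$ without a crossing, since $G$ itself is planar and the addition can be routed close to a face boundary), so $e(H)\ge n-1$, giving $lpsr(G)\ge \frac{n-1}{3n-6}$ and therefore $m(n)\ge \frac{n-1}{3n-6}\to\frac13$.

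Finally I would combine: for all $n\ge 47$,
\[
\frac{n-1}{3n-6}\;\le\; m(n)\;\le\;\frac{n+7}{3n-6},
\]
and both the left and right sides converge to $\frac13$, so by the squeeze theorem $m(n)\to\frac13$.

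The only nontrivial point is the uniform lower bound $e(H)\ge n-1$; strictly one could even avoid it and instead note that Theorem~\ref{lower} already forces $m(n)$ to lie in the shrinking interval $[\,\text{something}\,]$—but the cleanest argument is the connectivity remark above, which is essentially immediate because in a plane drawing of a subgraph of a planar graph $G$, any edge of $G$ not yet drawn that joins two distinct connected components can be inserted without creating a crossing. I expect no real obstacle here; the corollary is a routine consequence of the two main theorems.
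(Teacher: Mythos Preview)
Your upper bound $m(n)\le\frac{n+7}{3n-6}$ via Theorem~\ref{upper} is fine, but the corollary as literally written (with ``minimum'') cannot be completed, and your connectivity argument for the lower bound is where it breaks. What actually follows immediately from Theorems~\ref{lower} and~\ref{upper} is the statement with \emph{maximum} in place of \emph{minimum}: Theorem~\ref{lower} gives $\max_G lpsr(G)\ge\frac{n+2}{3n-6}$ and Theorem~\ref{upper} gives $\max_G lpsr(G)\le\frac{n+7}{3n-6}$, and both bounds tend to $\frac13$. That reading is the one consistent with the surrounding text and with the paper calling it ``an immediate corollary of the above theorems''; you correctly noticed that Theorem~\ref{lower} says nothing about the minimum.

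The claim that every labeled plane-saturated subgraph of a maximal planar graph is connected is false, and with it the literal ``minimum'' version of the corollary. The construction in Lemma~\ref{mainlemma2} already produces disconnected saturated drawings: a vertex $x$ placed inside the triangular face $vv_iv_{i+1}$ of the wheel is isolated whenever $x$ is adjacent in $G$ to neither $v_i$ nor $v_{i+1}$ (it is not adjacent to $v$, whose neighbourhood is exactly $\{v_1,\dots,v_k\}$), and no edge of $G$ from $x$ can then be added without crossing the triangle. Quantitatively, applying that lemma to a maximal planar graph possessing a vertex of degree $k\approx\sqrt{3n}$ gives a labeled plane-saturated subgraph with only $2k+\frac{6n}{k}+O(1)=O(\sqrt{n})$ edges, so $lpsr(G)=O(n^{-1/2})$ for such $G$ and hence $\min_G lpsr(G)\to 0$, not $\frac13$. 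Your heuristic that a missing edge ``can be routed close to a face boundary'' conflates the planarity of $G$ with the particular partial drawing $H$: in $H$ the face containing an isolated vertex may have none of that vertex's $G$-neighbours on its boundary, so there is simply no edge of $G$ available to add.
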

For the plane-saturation ratio, we found a stronger lower bound:

\begin{theorem}\label{lowerpsr}
For all $n$, there exists a maximal planar graph $G$ on $n$ vertices such that $psr(G)\geq \frac{\frac32n-3}{3n-6}=\frac12$.
\end{theorem}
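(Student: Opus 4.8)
The plan is to produce a single, highly symmetric maximal planar graph and show it has no small plane-saturated subgraph. I would take $G$ to be the graph obtained from a path $a_1a_2\cdots a_{n-2}$ by adding two new vertices $N,S$, each joined to the other and to every $a_i$ (the join of $K_2$ with $P_{n-2}$; a double wheel would work just as well). One checks directly that $G$ is planar with $3n-6$ edges, hence maximal planar. The point of this choice is an extremely permissive embeddability criterion, which I would isolate first as a lemma: for any graph $F$ on at most $n$ vertices, $F$ is a subgraph of $G$ if and only if there exist two vertices $x,y\in V(F)$ such that $F-\{x,y\}$ is a linear forest (a vertex-disjoint union of paths). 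Both directions are one line: send $x,y$ to $N,S$, and any linear forest on at most $n-2$ vertices embeds into $P_{n-2}$.

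Now let $H$ be a plane-saturated subgraph of $G$; the goal is $e(H)\ge \tfrac32 n-3$. Fix an embedding realizing $H\subseteq G$ and, via the lemma, a pair $x,y$ with $L:=H-\{x,y\}$ a linear forest. The first observation is that saturation forces $x$ and $y$ to be \emph{locally complete}: if $z$ lies on a face of $H$ incident to $x$ but $xz\notin E(H)$, then $xz$ may be drawn in that face without a crossing, and $(H+xz)-\{x,y\}=L$ is still a linear forest, so $H+xz$ is a subgraph of $G$ by the lemma, contradicting saturation. Hence every vertex cofacial with $x$ is a neighbour of $x$, and likewise for $y$. The same trick applied inside $L$ shows that no two vertices that are endpoints of two \emph{distinct} paths of $L$ can lie on a common face of $H$ (otherwise joining them keeps $L$ a linear forest, and again $H$ is not saturated), so the path-ends of $L$ are forced to be spread out among the faces.

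Local completeness is then converted into structure. The link of $x$ — the closed walk around $x$ formed by the boundary arcs of the faces at $x$ — winds once around $x$ and, by local completeness, visits only vertices of $N_H(x)$. Since $L$ is a forest, any closed walk inside its plane drawing has winding number $0$ about a point off the walk, so the link cannot be contained in $L$; it must therefore pass through $y$, whence $y\in N_H(x)$ and $xy\in E(H)$, and dually $x\in N_H(y)$. Comparing the two faces incident to the edge $xy$ inside both links pins down the local picture: $x$ is enclosed by a cycle consisting of $y$ together with a subpath of $L$, with $x$ adjacent to all of its vertices (plus possibly further structure behind non-triangular faces), and symmetrically for $y$; moreover the two enclosing subpaths of $L$ must overlap. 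The counting step then combines $e(H)=e(L)+\deg_H(x)+\deg_H(y)-1$ with three facts: the enclosing rims of $x$ and $y$ are long subpaths of $L$ and so contribute many edges to $e(L)$; any vertex of $L$ not reached by $x$ or $y$ is itself walled off and must carry its own enclosing edges; and the spread-out condition on the ends of $L$ prevents these contributions from being bundled together cheaply. Amortising all of this yields $e(H)\ge\tfrac32 n-3$, i.e. $psr(G)\ge\tfrac12$.

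The hard part is that final amortisation. Local completeness admits a surprisingly rich family of configurations, and to land on the exact bound $\tfrac32 n-3$ rather than a weaker constant one has to treat separately: the cases where $L$ is a spanning path or a spanning cycle; where $x$ and $y$ sit in the same region of the plane versus separated regions relative to their shared rim; where a link is non-simple because it meets $y$ (resp. $x$) more than once along non-triangular faces (the common neighbourhood of $x$ and $y$ then spans a whole face and must itself be accounted for); and the degenerate possibilities that $H$ is disconnected or has vertices of degree at most two. Showing that every one of these obeys the same inequality, with no slack wasted, is where essentially all of the work lies.
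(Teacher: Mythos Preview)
Your construction (the join of $K_2$ with a path) and your embeddability lemma are both fine, and the structural claims you extract --- local completeness of $x$ and $y$, the spread-out condition on path-endpoints of $L$, and the winding-number argument forcing $xy\in E(H)$ --- are correct. But the proof stops exactly where the content is: you write that the ``amortisation'' is the hard part and then list half a dozen interacting case distinctions (spanning path vs.\ forest, relative position of $x$ and $y$, non-simple links, disconnected $H$, low-degree vertices) without carrying any of them out. That is not a detail to be filled in later; it is the entire inequality $e(H)\ge \tfrac32 n-3$, and nothing in your structural setup singles out the constant $\tfrac32$ rather than some weaker one.

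The paper's argument shows how little of your structure is actually needed. It uses essentially your graph (the ``double wheel'' you mention: two nonadjacent apices over a cycle $C_{n-2}$), establishes the same fact that every vertex of $L:=H-\{x,y\}$ is adjacent to $x$ or $y$, and then simply \emph{counts}: classify the $n-2$ vertices of $L$ by how many of $\{x,y\}$ they see and by their $L$-degree $\in\{0,1,2\}$, write $e(H)$ as $\tfrac32(n-2)$ plus a signed combination of these class sizes, and observe that the only negative term (vertices of $L$-degree $0$) is dominated by the $L$-degree-$2$ term. That last inequality is one line: if $q_i$ is a neighbour of $x$ with $L$-degree $0$, then neither $q_{i-1}q_i$ nor $q_iq_{i+1}$ is present (where $q_1,\dots,q_m$ are the neighbours of $x$ in cyclic order), and saturation forces both $q_{i-1}$ and $q_{i+1}$ to have $L$-degree $2$, so the degree-$2$ neighbours of $x$ are at least as many as the degree-$0$ ones. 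No links, no enclosing cycles, no case analysis. I would recommend dropping everything after local completeness and going straight to this bookkeeping; your topological route, even if it can be pushed through, is doing far more work than the bound requires.
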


We also give some upper bounds on the minimal size of a plane-saturated subgraph of $G$. When $G$ has a vertex of degree $\Omega(n)$, these results yield a linear improvement over $3n-6$.

\begin{theorem}\label{psrupper}
If the maximum degree vertex of maximal planar graph $G$ on $n$ vertices has degree $d$, then $G$ has a plane-saturated subgraph with at most $$3n-6-min(d,n-d-1)$$ edges. Moreover, if $n\geq75$ and $d\geq 0.933n$, then $G$ has a plane-saturated subgraph with at most $$(3-\frac{d^2}{12n^2+dn})n+1$$ edges. 
\end{theorem}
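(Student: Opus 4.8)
The plan is to base everything on the structure around a maximum-degree vertex. Fix $v\in V(G)$ with $\deg(v)=d$. Since $G$ is a triangulation, the neighbours of $v$ form a cycle $C=v_1v_2\cdots v_d$, and $G':=G-v$ is a triangulation of the $d$-gon bounded by $C$ with exactly $k:=n-1-d$ interior vertices; thus $e(G')=3n-6-d$, and the interior vertices of $G'$ are precisely the non-neighbours of $v$. Every plane-saturated subgraph I build will be ``$G'$ together with $v$ re-inserted somewhere'' (for the first bound) or ``the wheel on $\{v\}\cup C$ together with a pruned copy of $G'$'' (for the second regime), so the task reduces to (i) choosing where to re-insert $v$, or which edges of $G'$ to delete, and (ii) exhibiting an embedding certifying saturation.

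For the general bound $3n-6-\min(d,n-d-1)$ I would use two complementary constructions. \emph{If $d\le n-d-1$}, delete the whole star at $v$ and re-insert $v$ as an isolated vertex into an inner triangular face of $G'$ all of whose three vertices are interior vertices of $G'$ (equivalently, a face disjoint from $C$); then the only missing edges of $G$ are the $d$ deleted star edges, and none of them can be drawn, so $H$ is plane-saturated with $3n-6-d$ edges. The content is that such a face exists: comparing the number $2k+d-2$ of inner faces of $G'$ against the number of inner faces meeting $C$ — which is $\sum_{v_i\in C}(\deg_{G'}(v_i)-1)$ and is controlled using that each interior vertex has degree $\ge 3$ — forces a $C$-avoiding face once $k$ is sufficiently large relative to $d$. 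The awkward sub-range is when the crude count fails because too many $v_i$ have large degree; there I would exploit that this situation forces many low-degree interior vertices and re-insert $v$ through one of those instead (its three neighbours then being non-neighbours of $v$). \emph{If $d>n-d-1$}, keep the entire wheel on $\{v\}\cup C$ (which already seals $v$ off), so it suffices to delete $k$ edges from $G'$ while keeping the drawing saturated \emph{inside} the $d$-gon; I would do this one interior vertex at a time, rerouting a single edge at each interior vertex $w$ into a neighbouring already-triangulated pocket (re-embedding $w$ there) so that the deleted edge cannot be re-added, for $k$ deletions and $3n-6-k$ edges.

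For the ``moreover'' bound the relevant regime has $d$ very close to $n$, hence $k$ small; but as long as $k\ge 1$, the triangulation $G'$ has an interior vertex $w$, and I would let $H$ consist of \emph{all} of $G'$ together with $v$ re-inserted into a triangular face $f$ of $G'$ incident to $w$. Such a face $f$ meets $C$ in at most two vertices $v_a,v_b$; re-adding only $vv_a$ and $vv_b$ (the only missing edges of $G$ incident to $f$) gives an embedding in which $v$ has degree at most two and lies only on faces with vertex set $\subseteq\{v_a,v_b,w\}$, so no further star edge is drawable and $H$ is plane-saturated with at most $e(G')+2=3n-4-d$ edges — comfortably below $\bigl(3-\tfrac{d^2}{12n^2+dn}\bigr)n+1$. (A cruder version of the same idea, re-inserting $v$ so that a controlled number of its neighbours are cut off by the structure already present in $G'$, presumably accounts for the precise rate $\tfrac{d^2}{12n+d}$ and the numerical threshold $d\ge 0.933n$, $n\ge75$ needed to beat the general bound.) The boundary case $d=n-1$ (so $k=0$, $G'$ a triangulated polygon) does not admit this move and must be treated separately, e.g.\ by an ear-clipping argument: a degree-$2$ boundary vertex $v_i$ of $G'$ can be flipped across the chord $v_{i-1}v_{i+1}$ into the adjacent triangle, which allows $vv_i$ to be deleted while remaining saturated, and such clips can be chained.

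The step I expect to be the main obstacle is the bookkeeping of re-embeddings. Every deletion of a star edge must come with an explicit planar embedding of the resulting $H$ certifying that the deleted edge's endpoints share no face, and one must simultaneously check that \emph{no other} edge of $G$ becomes drawable in that embedding; in the wheel-pruning case and the ear-clipping case these checks interact, and when $G'$ has interior vertices they can obstruct a flip or occupy the pocket into which a vertex is meant to be moved, so the local moves must be verified against the actual structure of $G'$. Making these moves compose, and — for the ``moreover'' bound — quantifying how many of them survive so as to recover the claimed $\tfrac{d^2}{12n+d}$ saving (together with cleanly disposing of the $d=n-1$ endpoint), is where the real work lies.
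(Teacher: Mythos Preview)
Your approach diverges from the paper's in a basic way, and the divergence hides a real gap.

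For the first bound, the paper does not analyse $G'=G-v$ at all. It draws the $d$-wheel around a maximum-degree vertex, distributes the remaining $n-d-1$ vertices as evenly as possible among the $d$ triangular faces, and then saturates greedily. The point is purely an edge count: the centre already has degree $d$, so in \emph{any} embedding of the saturated drawing into $G$ it must land on a maximum-degree vertex and hence can receive no further edges; consequently each of the $\min(d,n-d-1)$ nonempty wheel faces contains at least one non-triangular face of the final drawing (the one incident to the centre), and a plane graph on $n$ vertices with $t$ non-triangular faces has at most $3n-6-t$ edges. No explicit saturated drawing is ever exhibited, and no structural facts about $G'$ are needed.

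You instead try to exhibit an explicit saturated $H$, and this is where the argument breaks. You repeatedly reason as though only the deleted star edges need to be blocked (``the only missing edges of $G$ are the $d$ deleted star edges, and none of them can be drawn''), but that is the \emph{labeled} criterion. In the unlabeled setting, adding an edge $e$ to the drawing is forbidden only if the resulting abstract graph fails to embed in $G$ under \emph{every} labeling. In your case $d\le n-d-1$ you take $H=G'$ with the extra vertex isolated in an interior triangle $abc$; but the outer face of $H$ is still the $d$-gon $v_1\cdots v_d$, and a chord across it may well yield a subgraph of $G$ --- not under the identity labeling, but under one that sends the isolated vertex to some low-degree vertex of $G$ rather than to $v$. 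Likewise, adding the edge from the isolated vertex to $a$ produces $G'$ plus a pendant at $a$, which is isomorphic to $G'$ plus a pendant at $v_1\subseteq G$ whenever $G'$ has an automorphism carrying $a$ to some $v_i$. You never rule these out, and for general $G$ they cannot be ruled out. The same issue recurs in your $d>n-d-1$ case (where deleting edges from $G'$ and ``rerouting'' changes which relabelings are available) and in your treatment of the ``moreover'' clause, where your $H$ again leaves the outer $d$-gon of $G'$ as a face.

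For the ``moreover'' bound the paper's mechanism is entirely different and is worth knowing. It does not work at $v$. A degree-sequence pigeonhole shows that if $G$ has a vertex of degree $\ge C_1n$ then there is a vertex $w$ such that every vertex of strictly smaller degree is at least $C_2n$ smaller, where $C_2=C_1^2/(12+C_1)$ (otherwise one builds a chain of vertices whose degrees decrease by at most $C_2n$ at each step, and their degree sum exceeds $6n-12$). One then draws a fan of $\deg(w)-C_2n$ spokes at a central vertex, together with the path joining their tips, and seals this picture inside a triangle formed by two consecutive tips and their other common neighbour in $G$; all remaining vertices go outside. The degree gap forces the central vertex to map to a vertex of degree $\ge\deg(w)$ in any embedding, so at least $C_2n-1$ of its $G$-edges are undrawable. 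With $C_1=d/n$ this gives $C_2=d^2/(12n^2+dn)$, and the numerical thresholds $d\ge0.933n$, $n\ge75$ are where this beats the first bound. Your proposed route --- re-inserting $v$ into a single face of $G'$ to get $3n-4-d$ edges --- would be stronger if it worked, but it does not verify unlabeled saturation.
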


In Section \ref{sec2}, we prove Theorem \ref{lower}. In Section \ref{sec3}, we prove Theorem~\ref{upper}, showing the upper bound on the labeled plane-saturation ratio. Finally, in Section \ref{sec4}, we prove Theorems \ref{lowerpsr} and \ref{psrupper}. The paper concludes with some remarks and open questions.

\section{Construction for the Lower Bound on $lpsr(G)$}\label{sec2}
 
 Define $G=(V,E)$ to be the following graph on $n\ge 5$ vertices: $V=\{v_1, v_2, ..., v_n\}$, $E= \{(v_1,v_i) \mid 3\leq i\leq n\}\cup\{(v_2,v_j) \mid 3\leq j\leq n\}\cup\{ (v_i,v_{i+1}) \mid 3\leq i\leq n-1\}\cup \{(v_3,v_n)\}$. It is a maximal planar graph as can be seen in Figure \ref{construction}.

We prove the following:
\begin{theorem} \label{sec2main}
    Any labeled plane-saturated subgraph $H$ of $G$ has an edge-set of size at least $n+2$. That is, $lpsr(G)\ge \frac{n+2}{3n-6}$.
\end{theorem}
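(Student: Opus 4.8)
The plan is to analyze the structure of $G$ carefully and show that any labeled plane-saturated subgraph $H$ must contain ``most'' of the edges incident to the two hub vertices $v_1$ and $v_2$, together with enough of the path/cycle edges on $v_3,\dots,v_n$ to reach the threshold $n+2$. The graph $G$ is a double wheel-like structure: $v_1$ and $v_2$ are each joined to all of $v_3,\dots,v_n$, and $v_3,\dots,v_n$ form a cycle (the path $v_3 v_4 \cdots v_n$ closed up by the edge $(v_3,v_n)$). Note $G$ has exactly $(n-2) + (n-2) + (n-3) + 1 = 3n-6$ edges, confirming maximality. In the standard planar drawing, the cycle $C = v_3 v_4 \cdots v_n v_3$ bounds an inner region containing $v_1$ and an outer region containing $v_2$ (or vice versa); this is the key rigidity we will exploit.

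First I would fix the combinatorial embedding of $H$ (a subdrawing of a fixed plane drawing of $G$) and consider which edges of the cycle $C$ are present in $H$. Let $S \subseteq \{v_3,\dots,v_n\}$ be the set of cycle-vertices that are joined in $H$ to $v_1$, and $T$ the set joined to $v_2$. The core claim is: if $H$ is saturated, then $S \cup T$ must be almost all of $\{v_3,\dots,v_n\}$, and moreover the cycle edges together with the spokes must already ``fill up'' the faces so that no missing spoke or cycle edge can be inserted. Concretely, I would argue: (i) if some vertex $v_i$ ($3 \le i \le n$) has neither its $v_1$-spoke nor its $v_2$-spoke in $H$, then $v_i$ has degree at most $2$ in $H$ (only its two cycle-neighbours), and one can typically still route a spoke from $v_i$ to whichever of $v_1, v_2$ lies in a face incident to $v_i$ — contradicting saturation unless both spoke-slots are blocked, which forces a lot of other structure; (ii) count edges: the number of spoke edges present is at least (roughly) $2(n-2) - (\text{small defect})$, and we separately need the cycle to be nearly complete because a missing cycle edge $(v_i, v_{i+1})$ can be added unless the region between $v_i$ and $v_{i+1}$ is already triangulated by spokes to a common hub.

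The cleanest route is probably a direct face-by-face / discharging-style count: I would show that every face of $H$, other than at most a bounded number of exceptional ones, is a triangle, because a non-triangular face that contains a ``legal'' chord (an edge of $E$) contradicts saturation; then use Euler's formula on the plane graph $H$. If $H$ has $v = n$ vertices, $e$ edges, $f$ faces, and every face is a triangle except for a controlled amount of slack, then $2e \ge 3f - (\text{slack})$ and $v - e + f = 2$ together give $e \ge 3n - 6 - (\text{slack})$ — but that would be too strong, so the point is really to identify which non-triangular faces are forced and bound their total ``edge deficiency'' by $2n - 8$, yielding $e \ge n + 2$. I would carry this out by observing that a face of $H$ is non-triangular only if its boundary fails to contain a legal chord, and in this specific $G$ every pair of vertices at cyclic distance $\ge 2$ on $C$ fails to be an edge of $G$ unless one of them is a hub — so the faces of $H$ are essentially forced to be triangles of the form $v_1 v_i v_{i+1}$, $v_2 v_i v_{i+1}$, or the two ``big'' faces where a hub is missing, plus faces bounded only by cycle edges, which must then be spanned on one side entirely by hub-spokes.

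The main obstacle I anticipate is handling the interaction between the two hubs: because $v_1$ and $v_2$ are separated by the cycle $C$ in $G$'s unique-ish embedding, in $H$ the cycle $C$ need not be fully present, and then $v_1$ and $v_2$ could live in the same face of the partial cycle, which changes which spokes are routable and could in principle allow many spokes to be simultaneously blocked by just a few crossings-avoidance constraints. So the heart of the argument is a case analysis on how much of $C$ is present in $H$: (a) if all of $C$ is present, the two hub-regions are genuinely separated and an easy argument shows almost all $2(n-2)$ spokes must be there; (b) if $C$ is missing $k \ge 1$ edges, then $C$ breaks into $k$ arcs, and within the face(s) where the hubs sit one shows that each arc can ``see'' at most one hub and the spokes to the other hub are forced present for all but the arc-endpoints — carefully accounting for the at most $2k$ boundary vertices of arcs — and then checking the total still lands at $\ge n+2$. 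I would expect case (b) with $k$ moderately large to be where the constant is determined and where the bookkeeping is most delicate.
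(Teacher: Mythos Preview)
Your plan diverges substantially from the paper's argument, and the part you flag as ``the main obstacle'' is exactly where the proposal has a genuine gap.

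First, a conceptual slip: $H$ is \emph{not} a subdrawing of a fixed plane drawing of $G$; it is an arbitrary plane drawing of a labeled subgraph. You seem to realize this later, but your Case~(b) analysis (``each arc can see at most one hub and the spokes to the other hub are forced present for all but the arc-endpoints'') is still shaped by the standard picture with $v_1$ inside $C$ and $v_2$ outside. Once $C$ is broken and both hubs can sit in the same face of $H$, that dichotomy collapses: an arc can see both hubs, and there is no reason the spokes to ``the other hub'' are forced. Your sketch does not supply a mechanism to recover the count here, and I don't see one along these lines without essentially rediscovering the paper's structural lemmas.

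Second, your discharging/Euler paragraph aims in the wrong direction. You write $2e \ge 3f - (\text{slack})$ and try to bound the ``edge deficiency'' of non-triangular faces. That is the machinery for proving $e$ is \emph{close to} $3n-6$; here you want the much weaker $e\ge n+2$, and for that the clean move is simply $e = n + f - 2$ for connected $H$, so $f\ge 4$ already finishes. The whole problem reduces to: show $H$ is connected, and rule out $f\le 3$.

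This is exactly what the paper does. After your Case~(a) (full cycle present, giving $\ge 2n-4$ edges), the key observation is that the \emph{only} cycle of $G$ avoiding both hubs is $C$ itself, so in Case~(b) every cycle of $H$ contains $v_1$ or $v_2$. From this one proves, in order: every $v_i$ with $i\ge 3$ is adjacent in $H$ to a hub (since every face boundary contains a hub); $H$ is connected; each hub lies on a cycle; and $H$ has at least three faces. Euler then gives $e\ge n+1$, and one only needs a short ad~hoc analysis of the $f=3$ case (two cycles sharing a path, plus pendant trees) to push to $e\ge n+2$. Your arc-counting never isolates this ``every cycle contains a hub'' fact, which is what makes the topology tractable; without it, the bookkeeping you anticipate in Case~(b) with $k$ large does not close.
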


\begin{proof}
Let $H$ be a labeled plane-saturated subgraph of $G$. First we prove the theorem for $n\geq6$. 

First note that in order to be (labeled) plane-saturated, some cycle must be present in $H$, since if the drawing was a forest, any new edge could be added to it without introducing a crossing. 

If the cycle $v_3v_4\dots v_nv_3$ is contained in $H$, then every vertex on the cycle must be connected to at least one of $v_1$ and $v_2$. This is because for any $3\leq i\leq n$, $v_i$ lies on the boundary of at least two faces. Since $v_1,v_2$ are added either inside or outside the $(n-2)$-cycle, at least one of the two faces whose boundary includes $v_i$ contains at least one of $v_1, v_2$. Without loss of generality, say one face contains $v_1$. Then the edge $(v_1,v_i)$ must be present in $H$, as otherwise we could connect $v_1$ and $v_i$ through the face without introducing a crossing, contradicting the fact that $H$ is saturated. The cycle of length $n-2$ and the at least $n-2$ additional edges from the vertices on the cycle to $\{v_1,v_2\}$ give at least $2n-4$ edges in $H$, which is larger than or equal to $n+2$ for $n\geq 6$.

 \begin{figure}
	\begin{center}
		\includegraphics[scale=0.8]{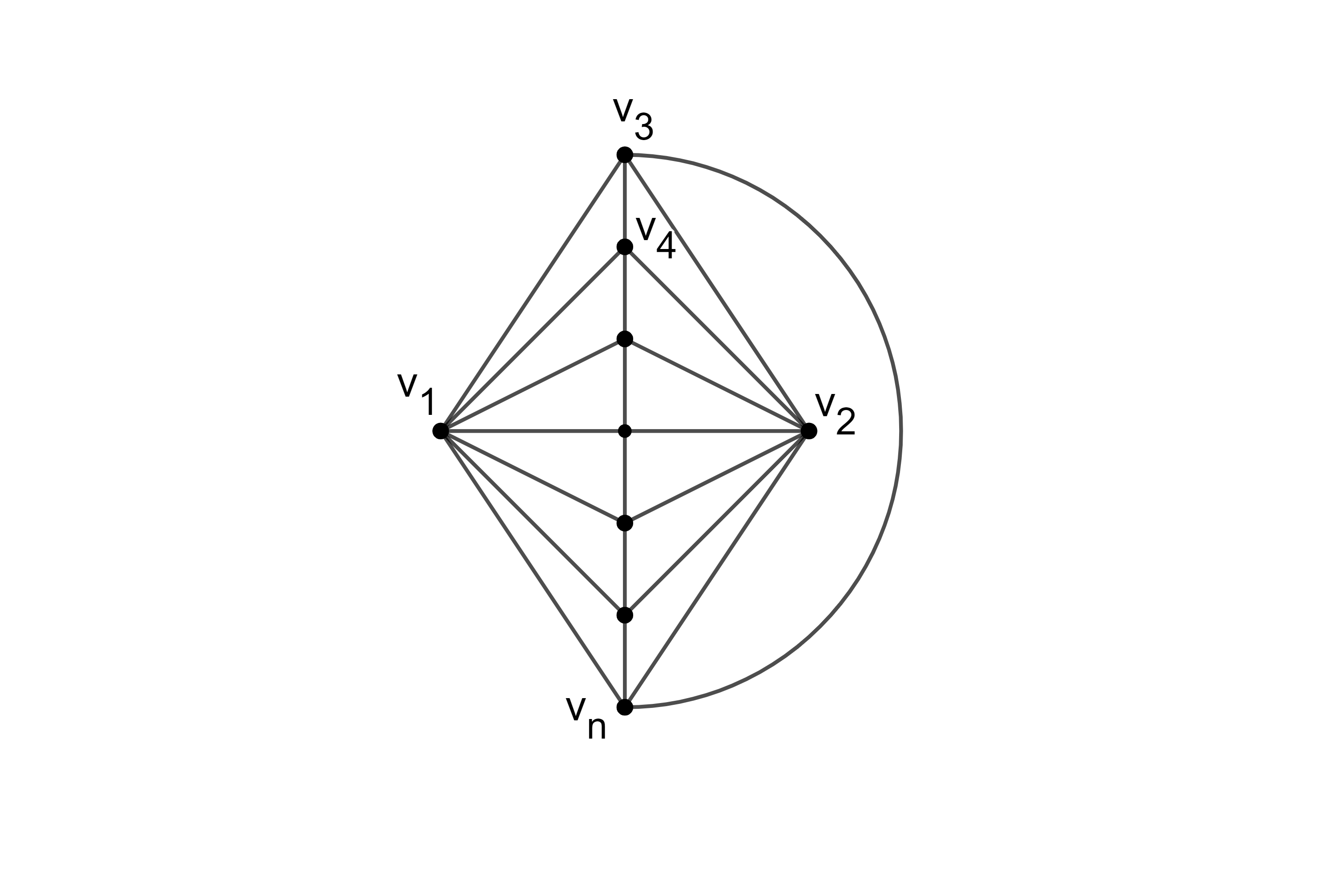}
		\caption{$G$ is a maximal planar graph.}
		\label{construction}
	\end{center}
\end{figure}

Now we prove some lemmas. They are not numbered, as they are only intended to make the proof easier to follow.

\begin{lemma*}
    If in $H$, each cycle contains at least one of $v_1,v_2$, then for each $i\geq3$, at least one of $(v_1,v_i), (v_2,v_i)$ is an edge of $H$.
\end{lemma*}
 \begin{proof}
     Assume that in $H$, every cycle contains $v_1$ or $v_2$. Suppose that there is some $v_i$ with $i\ge 3$ such that there is no edge from $v_i$ to $v_1$ nor to $v_2$. $v_i$ must lie within some face, or on the boundary of a face. The boundary of each face contains a cycle of length at least $3$. Therefore the boundary of the face around $v_i$ includes either $v_1$ or $v_2$ so it is possible to add an edge from either $v_1$ or $v_2$ to $v_i$, without introducing a crossing. That means $H$ cannot be saturated, contradicting our assumption that neither $(v_1,v_i)$ nor $(v_2,v_i)$ was present.
     \end{proof}
     As $v_3v_4\dots v_n$ is the only cycle of $G$ containing neither $v_1$ nor $v_2$, we may assume that each cycle of $H$ contains at least one of $v_1, v_2$. Thus, for each $i\geq3$, $v_i$ is connected to at least one of $v_1,v_2$ via an edge in $H$. Thus, the subgraph $H$ has at most two connected components, where two is only possible if $v_1, v_2$ are in separate connected components.

\begin{lemma*}
    $H$ is connected.
\end{lemma*}
\begin{proof}
If $v_1,v_2$ are in separate components $H_1, H_2$ where $H_1$ is a tree, then $H_2$ has a cycle (any saturated drawing has at least one cycle) and we can always add an edge from $v_1$ to a boundary vertex of whatever face of $H_2$ contains $H_1$ in its interior. Thus, we may assume that both $H_1$ and $H_2$ contain a cycle. Then, $H_2$ is contained in some face of $H_1$. That face necessarily contains $v_1$ as a boundary vertex since any face boundary contains a cycle, and each cycle contains $v_1$ or $v_2$. Then it is possible to add an edge between $v_1$ and a vertex $v_i$ for $i\ge3$ on the outer face of $H_2$.  Thus, $v_1, v_2$ could not have been in separate components, showing $H$ is connected.
\end{proof}

From now on, we can assume $H$ is connected, with each cycle containing at least one of $v_1,v_2$.

\begin{lemma*}
    $v_1,v_2$ must each lie on a cycle in $H$.
\end{lemma*}

\begin{proof}
    If we assume the lemma to be false, then without loss of generality, no cycle contains $v_1$. The drawing of each face consists of a cycle around it, plus possibly trees rooted at each vertex, as seen in Figure \ref{fig:1-cycle}. Thus, $v_1$ lies on such a tree. Then it is possible to add an edge from $v_1$ to any vertex (except $v_2$) on its surrounding face it is not already adjacent to, without causing a crossing. Hence all these edges must be present in $H$ since it is a saturated subgraph. The boundary of each face contains a cycle of length at least $3$, so the one around $v_1$ contains at least two vertices, $u, w$ that are not $v_1$ or $v_2$. Since $v_1$ must be adjacent to all such vertices, the path from $u$ to $w$ along with the edges $v_1u$ and $v_1w$ forms a cycle containing $v_1$, giving a contradiction. Thus $v_1$ and $v_2$ are each contained in at least one cycle.   
    \end{proof}

\begin{figure}
    \centering
    \includegraphics[scale=0.6]{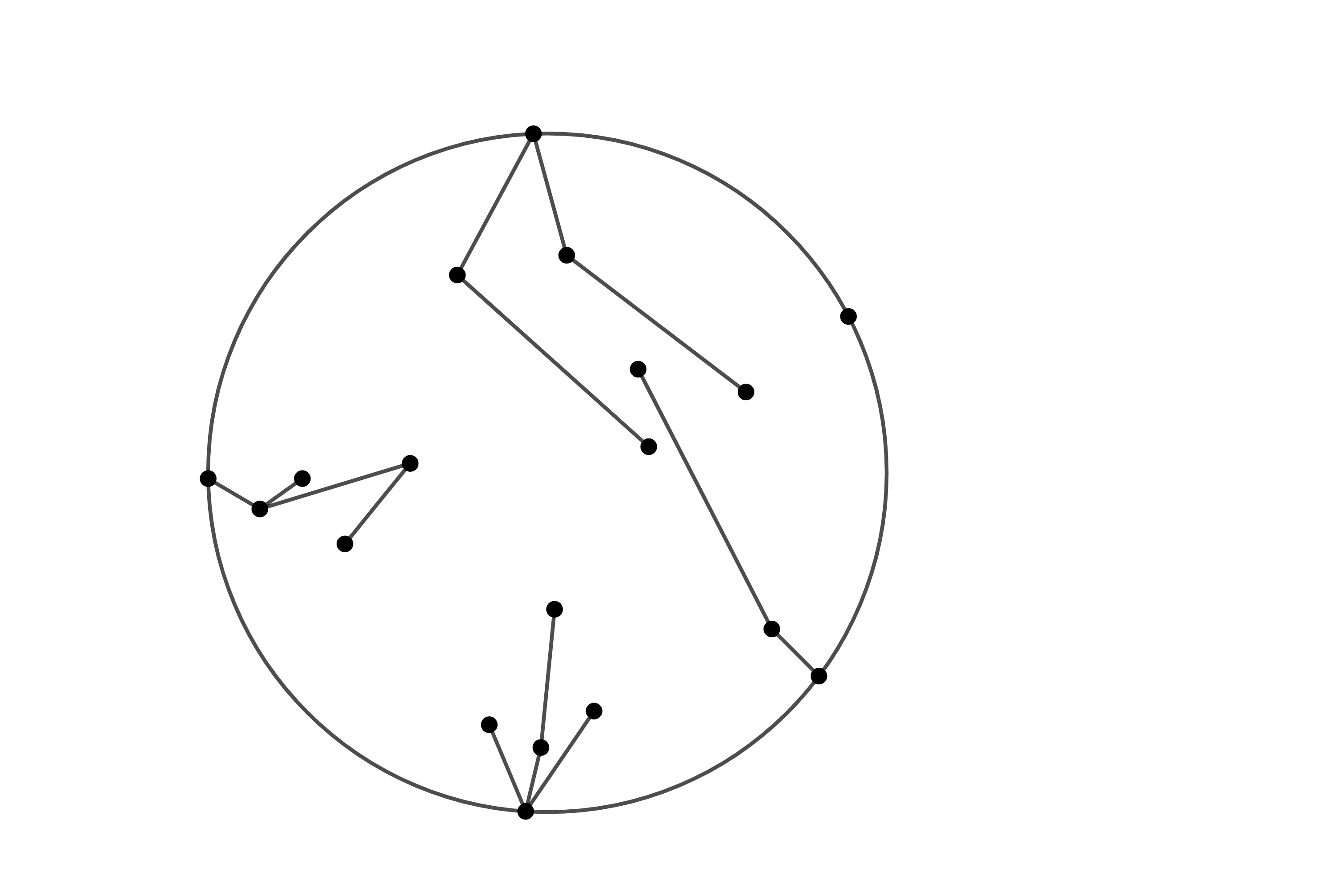}
    \caption{If $H$ is connected, the drawing of each face looks like a cycle, together with trees potentially attached to each vertex.}
    \label{fig:1-cycle}
\end{figure}

Using the above lemma, and that $H$ is saturated, we can prove the following:

\begin{lemma*}
    $H$ has at least three cycles. 
\end{lemma*}

\begin{proof}
$H$ cannot have only one cycle through both $v_1$ and $v_2$, since then every other vertex is either on the cycle, or lies inside one of the two faces bounded by the cycle. Any $v_i$ not lying on the cycle cannot be adjacent to both $v_1, v_2$, as that would form a second cycle. Without loss of generality, such a $v_i$ is nonadjacent to $v_2$, but then we can add edge $v_2v_i$ through the face without causing a crossing, contradicting that $H$ is saturated. Thus, the cycle contains all $n$ vertices, and $H$ is simply $C_n$. As $n>4$, there is some vertex $v_i$ with $i\ge 3$ on the cycle, but not adjacent to $v_1$. Then edge $v_1v_i$ can be added through the face,  again contradicting that $H$ was saturated.

Thus, a saturated drawing has at least two cycles, so at least three faces. If there are exactly two cycles, and both contain $v_1$ and $v_2$, that would yield a third cycle. If instead one of $v_1, v_2$, say $v_1$ is only on one cycle, then the other cycle contains at least two vertices $u,w$ besides $v_2$ and we may assume at least one of these is non-adjacent to $v_1$ in $H$, as otherwise we have two cycles sharing a path from $u$ to $w$, and thus at least three cycles. It is then possible to add an edge between $v_1$ and a non-$v_2$ vertex on the other cycle, without introducing a crossing, contradicting that $H$ is saturated. Thus, a labeled plane-saturated $H$ has at least three cycles.
\end{proof}

\begin{figure}
    \centering
    \includegraphics[scale=0.8]{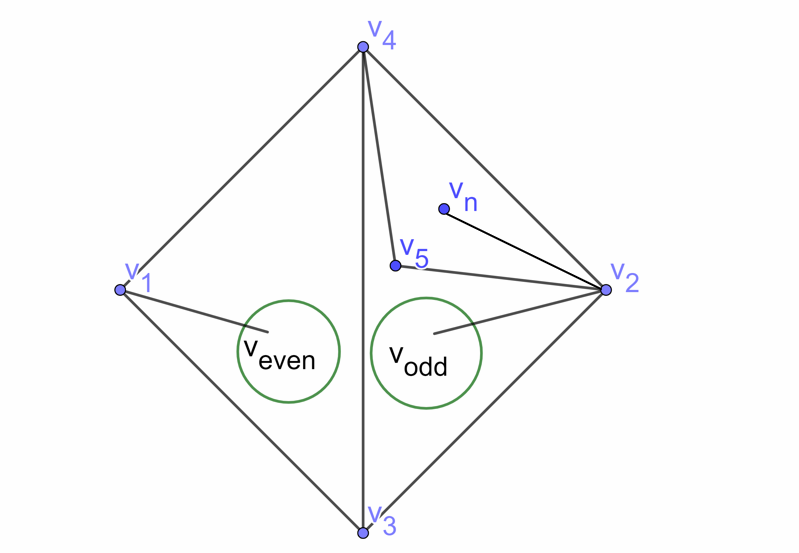}
    \caption{Saturated drawing of $n+2$ edges for Proposition \ref{tight1}. In the green circles we placed all the remaining even and odd indexed vertices, respectively. They are only connected to $v_1$ and $v_2$, respectively.}
    
    \label{fig:n+2}
\end{figure}

Now we return to the proof of Theorem \ref{sec2main}.

If $H$ has at least four faces, then by Euler's criterion, the number of edges is at least $n+2$. If there are only three faces, then the graph consists of two cycles which share a path, plus possibly some additional trees rooted at these vertices. Let $P$ be the shared path and $C_1, C_2$ be the cycles sharing it. Without loss of generality, suppose $v_1$ lies on $C_1$, and $v_2$ lies on $C_2$. (Note that it is possible for $v_1$ or $v_2$ to lie on both.) $v_1$ can be connected to each non-$v_2$ vertex on $C_1$ through the face, which means that since $H$ is saturated, $C_1$ can contain at most $2$ vertices apart from $v_1,v_2$. Similarly, $C_2$ can only contain at most $2$ extra vertices. Each face is bounded by a cycle, so any vertex inside a face can have an edge added between it and one of $v_1,v_2$ on the boundary of that face, without introducing a crossing. Thus any vertex not contained on $C_1$ or $C_2$ is already a neighbour of $v_1$ or $v_2$, and consequently must have degree $1$. 

Suppose some endpoint of $P$ is $v_i$ for $i\ge 3$. That $v_i$ must be adjacent to $v_1$ along one of $C_1, C_2$ and to $v_2$ along the other. As $v_i$ only has three neighbours along $C_1$ and $C_2$ and no others, there must be some vertex $v_j$ where $j\ge 3$ and $(v_i,v_j)$ is an edge of $G$ such that $(v_i,v_j)$ is not an edge of $H$. However, $v_i$ is incident to every face of $H$, so it is possible to add the edge $(v_i,v_j)$ without introducing a crossing, contradicting that $H$ is saturated. Therefore, the endpoints of $P$ must be $v_1,v_2$ but as they are not adjacent in $G$, they must have at least one vertex in between them along $P$. In fact, they can only have one vertex in between them along $P$ since $C_1, C_2$ have at most four edges each and $v_1,v_2$ also need to have a vertex between them on each of $C_1,C_2$ that is not along $P$. As both $v_1,v_2$ lie on the outer face, it is impossible for a degree $1$ vertex to be on the boundary of the outer face since then it would be possible to add an edge from it to whichever of $v_1,v_2$ it is not already adjacent to, without causing a crossing. Thus any degree $1$ vertex is on a face incident to the vertex $w$, where $w$ is the common neighbour of $v_1,v_2$ along $P$. This means the two vertices among $\{v_3,\dots,v_n\}$ which are neighbours of $w$ in $G$ necessarily lie on a face incident to $w$ so it is possible to add an edge from one to $w$ without causing a crossing, contradicting that $H$ is saturated.
Thus, there are no degree $1$ vertices and $C_1,C_2$ contain all the vertices of the graph. But then $H$ contains at most $6$ vertices, as each cycle has at most $2$ vertices that are not $v_1$ or $v_2$. It can only have $6$ vertices if none of the extra vertices lie on $P$. However, then $P$ must contain only $v_1$ and $v_2$, meaning there is an edge between $v_1$ and $v_2$, a contradiction. 

The proof is done for $n\geq6$. 
For $n=5$, we can only separate $2$ vertices from each other, hence out of the $9$ edges, at least $8$ are present in the saturated subdrawing, which is larger than $n+2$.
\end{proof}

\begin{proposition} \label{tight1}
    The lower bound is tight for $n\ge 7$, meaning that $G$ has a labeled plane-saturated subgraph with $n+2$ edges.
\end{proposition}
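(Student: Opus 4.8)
The plan is to exhibit an explicit labeled plane-saturated subgraph $H$ of $G$ with exactly $n+2$ edges; together with Theorem~\ref{sec2main} this also shows $lpsr(G)=\frac{n+2}{3n-6}$. I would build $H$ from a small ``core'' together with a large family of pendant vertices, mirroring Figure~\ref{fig:n+2}. Concretely, fix a constant-size set $S\subseteq V$ containing $v_1,v_2$ and a few low-index vertices (for instance $S=\{v_1,v_2,v_3,v_4,v_6\}$, or a variant adapted to small $n$), chosen so that $G$ restricted to $S$ is $2$-connected, has exactly $|S|+2$ edges, and admits a plane embedding with four faces, two of which are triangles — one incident to $v_1$ but not $v_2$, call it $F_1$, and one incident to $v_2$ but not $v_1$, call it $F_2$. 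Take the core of $H$ to be this embedding of $G[S]$. Then attach each remaining vertex $v_i$ by a single edge: if $i$ is even, join $v_i$ to $v_1$ and route the edge into $F_1$; if $i$ is odd, join $v_i$ to $v_2$ and route the edge into $F_2$. Since the core has $|S|+2$ edges and we add exactly one edge per remaining vertex, $e(H)=(|S|+2)+(n-|S|)=n+2$, and $H$ is a plane subgraph of $G$ by construction.

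The bulk of the work is checking that $H$ is saturated, i.e.\ that no edge of $E\setminus E(H)$ can be drawn without a crossing. I would sort the missing edges into types. Since $G[S]$ is all of $G$ on $S$, there are no missing edges inside the core. For a ``spoke'' $v_1v_i$ with $i$ odd — so $v_i$ is a pendant of $v_2$ interior to $F_2$ — the vertices $v_1$ and $v_i$ share no face (as $v_1\notin F_2$), so $v_1v_i$ cannot be added; symmetrically for $v_2v_i$ with $i$ even. For a path edge $v_iv_{i+1}$ with both $i,i+1$ outside $S$, one endpoint is even and lies in $F_1$, the other is odd and lies in $F_2$, so again there is no common face. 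The delicate missing edges are the few path edges with exactly one endpoint in $S$ (such as $v_4v_5$) together with the special edge $v_3v_n$: for these, one endpoint is a core vertex lying on the boundary of the face containing the other (pendant) endpoint. Here I would invoke the observation that a pendant $v_i$ attached to a hub $h$ inside a face $F$ can be joined to a boundary vertex of $F$ without a crossing only if its stub is first or last in the rotation of stubs around $h$ inside $F$. Thus one chooses the cyclic order of the pendants inside $F_1$ and $F_2$ so that every ``dangerous'' pendant (one whose index is adjacent, along the spine $v_3v_4\cdots v_nv_3$, to a core vertex of the corresponding face) is buried strictly between two other pendant stubs, which blocks the associated edge.

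The main obstacle is exactly this last step: I must guarantee that each of $F_1,F_2$ contains enough pendants to bury all its dangerous ones, while also keeping every core face ``safe,'' meaning that neither diagonal of a quadrilateral core face is an edge of $G$ (this is what stops a missing spoke from being added through such a face — one checks each diagonal is of the form $\{v_1,v_2\}$ or $\{v_i,v_j\}$ with $|i-j|\ge 2$ and $\{i,j\}\ne\{3,n\}$, hence not in $E$). For $n$ sufficiently large the parity split automatically supplies enough pendants and a suitable core and rotation always exist; for the finitely many remaining values of $n$ with $n\ge 7$ I would simply give the drawings directly, a routine ad hoc check. Combining, $e(H)=n+2$, so $lpsr(G)\le\frac{n+2}{3n-6}$, which meets the lower bound of Theorem~\ref{sec2main}.
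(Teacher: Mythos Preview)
Your overall shape — a constant-size core plus parity-split pendants attached to $v_1$ and $v_2$ — is exactly what the paper's figure does, and the edge count $n+2$ is right. But your saturation argument contains a genuine error: the ``burying'' claim is false. A pendant edge is a bridge of the plane graph, so it does \emph{not} separate the face it sits in; the region on both sides of a stub $v_2v_i$ is one and the same face, and one can simply route a curve around the tips of the intervening stubs. Hence a pendant $v_i$ buried in the middle of the rotation can still be joined to \emph{every} vertex on the boundary of its face, not just to its rotational neighbours.

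Concretely, with your core $S=\{v_1,v_2,v_3,v_4,v_6\}$ and $F_2$ the triangle $v_2v_3v_4$, the vertex $v_5$ (a pendant of $v_2$ inside $F_2$) and the core vertex $v_4$ lie on the boundary of the same face no matter where $v_5$ sits among the other stubs, so $v_4v_5\in E(G)$ can be added without a crossing. The same happens with $v_3v_n$ in whichever of $F_1,F_2$ contains $v_n$. Your $H$ is therefore not saturated. What actually makes the paper's construction work is not the rotation order but the choice of core: the triangular face receiving the odd pendants must have no even-indexed vertex (and no $v_1$) on its boundary, and symmetrically for the even pendants, so that no pendant shares a face with any of its $G$-neighbours other than its hub. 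Your $5$-vertex core cannot achieve this (the only triangles of $G[S]$ through $v_2$ also contain $v_4$), so a slightly larger core is needed; once the faces are chosen correctly, no burying argument is required at all.
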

\begin{proof}
    A saturated subgraph with $n+2$ edges can be seen on Figure \ref{fig:n+2}.
\end{proof}

Theorem \ref{sec2main} now clearly implies Theorem \ref{lower} if we just pick $G$ as our graph.

\section{Proof of the Upper Bound on $lpsr(G)$}\label{sec3}
In order to prove Theorem \ref{upper}, we use several lemmas. These could be useful in other arguments too, so we apply numbering on these lemmas unlike in the previous section. The number of vertices of $G$ is denoted by $n$ throughout the section.

\begin{lemma} \label{cycle}
    If $G$ is a maximal planar graph, and $v$ is a vertex of degree $k$, then the $k$ neighbouring vertices of $v$ form a cycle in some order in $G$.
\end{lemma}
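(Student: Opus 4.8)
\textbf{Proof proposal for Lemma~\ref{cycle}.}

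The plan is to use the fact that in a maximal planar graph $G$, every face of the (unique up to reflection) planar embedding is a triangle. Fix such an embedding, and consider a vertex $v$ of degree $k$ with neighbours $u_1,\dots,u_k$. I would first argue that locally around $v$, the edges $vu_1,\dots,vu_k$ appear in some cyclic (rotational) order, and that between two consecutive edges $vu_i$ and $vu_{i+1}$ in this rotation there is a face of $G$. Since every face is a triangle and $v$ lies on this face together with $u_i$ and $u_{i+1}$, the third vertex of the face must be $u_{i+1}$ and the edge $u_iu_{i+1}$ must be present in $G$. Doing this for every consecutive pair (indices mod $k$) produces the cycle $u_1u_2\cdots u_k u_1$ among the neighbours of $v$, which is exactly the claim.

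The key steps, in order, would be: (1) invoke maximality to conclude all faces are triangles; (2) set up the rotation system at $v$ and note the $k$ "wedges" between consecutive incident edges each correspond to a face incident to $v$; (3) in each wedge, use triangularity of the bounding face to extract the edge $u_iu_{i+1}$ between the two neighbours bounding that wedge; (4) observe that the edges obtained this way chain together into a single closed cycle through all $k$ neighbours, because consecutive wedges share one neighbour. A small amount of care is needed to rule out degeneracies: that no face incident to $v$ is bounded by only $v$ and one neighbour (impossible since faces are triangles and $G$ is simple), and that the $u_i$ are genuinely distinct so the resulting closed walk is a cycle rather than a shorter closed walk traversed multiple times — this follows since $v$ has exactly $k$ distinct neighbours and each appears on exactly the two wedges adjacent to the edge $vu_i$.

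I expect the main obstacle to be phrasing the rotation-system argument rigorously without importing heavy machinery: one must be careful that "consecutive edges in the rotation at $v$" really do bound a common face, which is the standard relationship between the rotation system and the face structure of a plane graph, but it deserves an explicit sentence. An alternative, perhaps cleaner, route avoids rotation systems entirely: delete $v$ from the embedding, creating a face $F$ whose boundary is a closed walk $W$ on exactly the former neighbours of $v$; since $G$ was maximal planar and $n \geq 4$, one checks $W$ is in fact a cycle (the neighbourhood of $v$ induces a cycle because re-adding $v$ inside $F$ and triangulating forces each boundary edge of $F$ to be a genuine edge of $G$). Either way the heart of the matter is the same local triangulation fact, and once that is set up the cycle falls out immediately; I would present whichever version keeps the prose shortest, likely the face-boundary version since the rest of the paper already works directly with faces and their bounding cycles.
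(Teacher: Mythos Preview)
Your proposal is correct and takes essentially the same approach as the paper: both fix a plane embedding, order the neighbours of $v$ cyclically by the rotation at $v$, and argue that any two consecutive neighbours $u_i,u_{i+1}$ in this rotation must be adjacent in $G$, which chains into the desired cycle. The only cosmetic difference is that the paper argues this adjacency directly from maximality (if $u_iu_{i+1}$ were absent one could draw a non-crossing curve between them hugging the edges $vu_i$ and $vu_{i+1}$, contradicting maximality of $G$), whereas you first pass through the equivalent fact that every face of a maximal planar embedding is a triangle and then read the edge $u_iu_{i+1}$ off the triangular wedge-face.
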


\begin{proof}
    Consider a plane drawing of $G$. The neighbouring vertices of $v$ can be ordered by the clockwise order of the curves $vv_i$ coming out of point $v$. Let $v_1$ and $v_2$ be neighbours of $v$, next to each other in clockwise order. We prove that there is a Jordan curve between them that does not intersect any other curves, and has no common point with them. Since $v$ is a neighbour of both of them, there are Jordan curves from $v$ to each that do not intersect any other curve, and $v$ does not have an edge that goes between the two curves. Therefore we can just draw a new non-crossing curve from $v_1$ to $v_2$ by following these Jordan curves with an infinitesimally close curve. See Figure \ref{close} for intuition.
    Therefore there is an edge between consecutive vertices, so they form a cycle.
\end{proof}

\begin{figure}
	\begin{center}
		\includegraphics[scale=0.8]{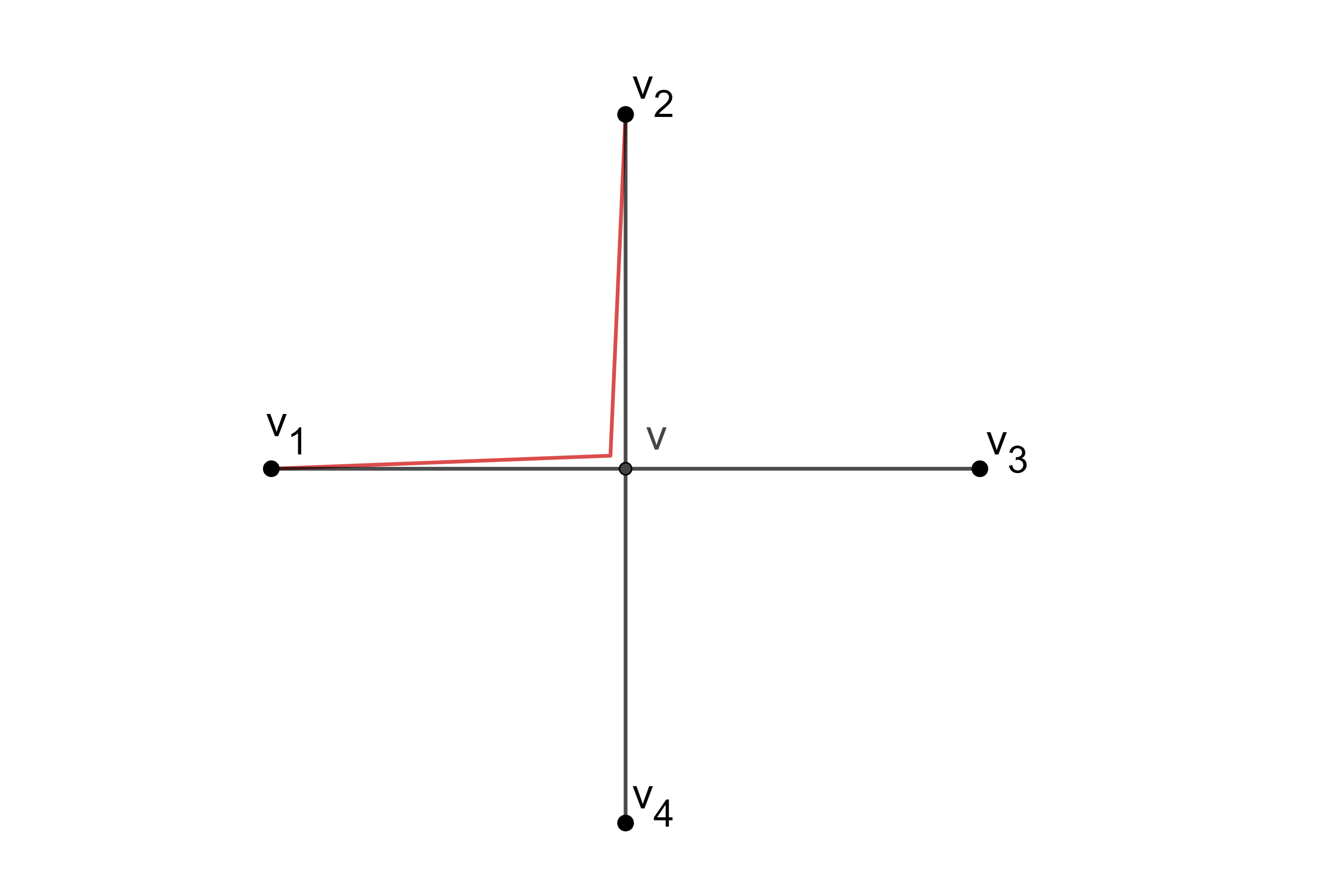}
		\caption{If $v_1$ and $v_2$ were not connected already, we could add an extra edge (with red), contradicting the maximality of $G$ as a planar graph.}
		\label{close}
	\end{center}
 \end{figure}

\begin{lemma}\label{no3pair}
In a maximal planar graph $G$ of at least $5$ vertices, there are no two degree $3$ vertices connected by an edge.
\end{lemma}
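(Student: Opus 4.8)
The plan is to argue by contradiction using Lemma~\ref{cycle} together with a small local case analysis. Suppose $u$ and $w$ are adjacent vertices, each of degree $3$, in a maximal planar graph $G$ on $n\ge 5$ vertices. Write $N(u)=\{w,a,b\}$ and $N(w)=\{u,c,d\}$. By Lemma~\ref{cycle} applied to $u$, the three neighbours $w,a,b$ form a triangle, so in particular $(a,b)$ is an edge and both $a$ and $b$ are adjacent to $w$. Similarly, applying Lemma~\ref{cycle} to $w$, the neighbours $u,c,d$ form a triangle, so $(c,d)$ is an edge and both $c,d$ are adjacent to $u$. Since $u$ has only the three neighbours $w,a,b$, the vertices $c,d$ (which are neighbours of $u$) must lie in $\{a,b\}$; combined with $c,d$ being distinct and different from $u$, we get $\{c,d\}=\{a,b\}$.

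So $N(u)=\{w,a,b\}$ and $N(w)=\{u,a,b\}$, and all of the edges $(u,w),(u,a),(u,b),(w,a),(w,b),(a,b)$ are present: the four vertices $u,w,a,b$ induce a $K_4$. The next step is to observe that this $K_4$, drawn in the plane, partitions the plane into four triangular faces, one of which (say the one bounded by $a,b,w$ not containing $u$, or by symmetry any of them) together with the fact that $u$ and $w$ have no further neighbours, forces $u$ or $w$ to lie in a region bounded only by $a,b$ and the other of $u,w$. Concretely: in the plane drawing, the triangle on $\{u,a,b\}$ has $w$ on one side; since $\deg(u)=3$, $u$ is incident only to faces of this $K_4$; every face of this $K_4$ has its boundary among $\{u,w,a,b\}$, so any vertex of $G$ other than these four lies strictly inside one of the four triangular regions and can only ''see'' three of the four vertices. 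Since $G$ is connected and $n\ge 5$, there is a fifth vertex $x$; $x$ lies inside one of the four faces. If $x$ lies in a face whose boundary is a triangle not containing $u$ (there is exactly one such face, bounded by $w,a,b$), then $u$ is a cut vertex separating... — more cleanly, the face containing $x$ has a triangular boundary, and $G$ restricted to that region plus boundary is itself a maximal planar graph, but one of $u,w$ is not on that boundary, contradicting that $u$ (resp.\ $w$) has degree $3$ only if that excluded vertex still needs its third neighbour — so the cleanest finish is: the $K_4$ on $\{u,w,a,b\}$ is a separating set only if $n=4$, otherwise some face must contain a further vertex, and then either $u$ or $w$ is disconnected from that vertex, contradicting connectivity of $G$ since $G$ on $n\ge5$ vertices is $2$-connected (indeed $3$-connected, being maximal planar).

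The hard part, and the part I would write most carefully, is the planarity/topology step: making precise that a vertex of degree $3$ in a maximal planar graph has all three of its incident faces being triangles whose vertex sets lie inside $\{u,w,a,b\}$, and hence that $u,w,a,b$ form a connected component when $n\ge 5$ — which is absurd since $G$ is connected. An efficient way to phrase this: in any plane drawing of maximal planar $G$, every face is a triangle; the three faces incident to $u$ are the triangles $uwa$, $uwb$ (using that $w,a$ and $w,b$ are consecutive in the rotation at $u$, which follows from Lemma~\ref{cycle}), and $uab$. Hence every edge incident to $u$ or $w$ has both endpoints in $\{u,w,a,b\}$, and no edge leaves this set except possibly from $a$ or $b$ — but the three faces of the $K_4$ other than $uab$ are $uwa,uwb,wab$, all triangles of $G$, so in fact the drawing of this $K_4$ has no vertex of $G$ in three of its four regions; the fourth region, inside triangle $uab$ and not containing $w$, is empty because $uab$ is a face. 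Therefore $V(G)=\{u,w,a,b\}$, so $n=4$, contradicting $n\ge 5$. I would then remark this is why the hypothesis $n\ge 5$ is needed (for $n=4$, $K_4$ itself is a counterexample).
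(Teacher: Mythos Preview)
Your argument is correct and follows essentially the same route as the paper: use Lemma~\ref{cycle} to deduce that $\{u,w,a,b\}$ induces a $K_4$, then observe that every triangular region of this $K_4$ is already a face of $G$ (each is incident to one of the degree-$3$ vertices $u$ or $w$), leaving no room for a fifth vertex. The exposition could be tightened by deleting the false starts in the middle paragraph and by making explicit that $wab$ is a face of $G$ via the symmetric argument at $w$, but the proof is sound.
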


\begin{proof}
Suppose we had such a graph, and consider a plane drawing of it. Let $v_1,v_2$ have degree $3$, and $(v_1,v_2)\in E(G)$. Let the other neighbours of $v_1$ be $v_3,v_4$. By Lemma \ref{cycle}, $v_2v_3$ and $v_2v_4$ are both edges of the graph, and since $deg(v_2)=3$, these give all the three edges from $v_2$.
Now note that, new vertices cannot be added to a face where one of the vertices cannot accommodate more edges.  This happens because whatever structure the face has, the vertex with no more possible edges is indistinguishable from any other inner points of the bounding curves, hence it will end up somewhere as an extra point on the boundary of a new face.  This would give a face with at least $4$ vertices on its boundary, contradicting the maximal planarity of $G$. See Figure \ref{fig:indist} for clarity. Since in the drawing of $v_1,v_2,v_3,v_4$, every face contains either $v_1$ or $v_2$ on its boundary, we cannot add any new vertices to the graph. Hence $|V(G)|=4$ which is also a contradiction. Thus if $G$ has size at least $5$, it cannot have two degree $3$ vertices joined by an edge.
\end{proof}

\begin{figure}
	\begin{center}
		\includegraphics[scale=0.8]{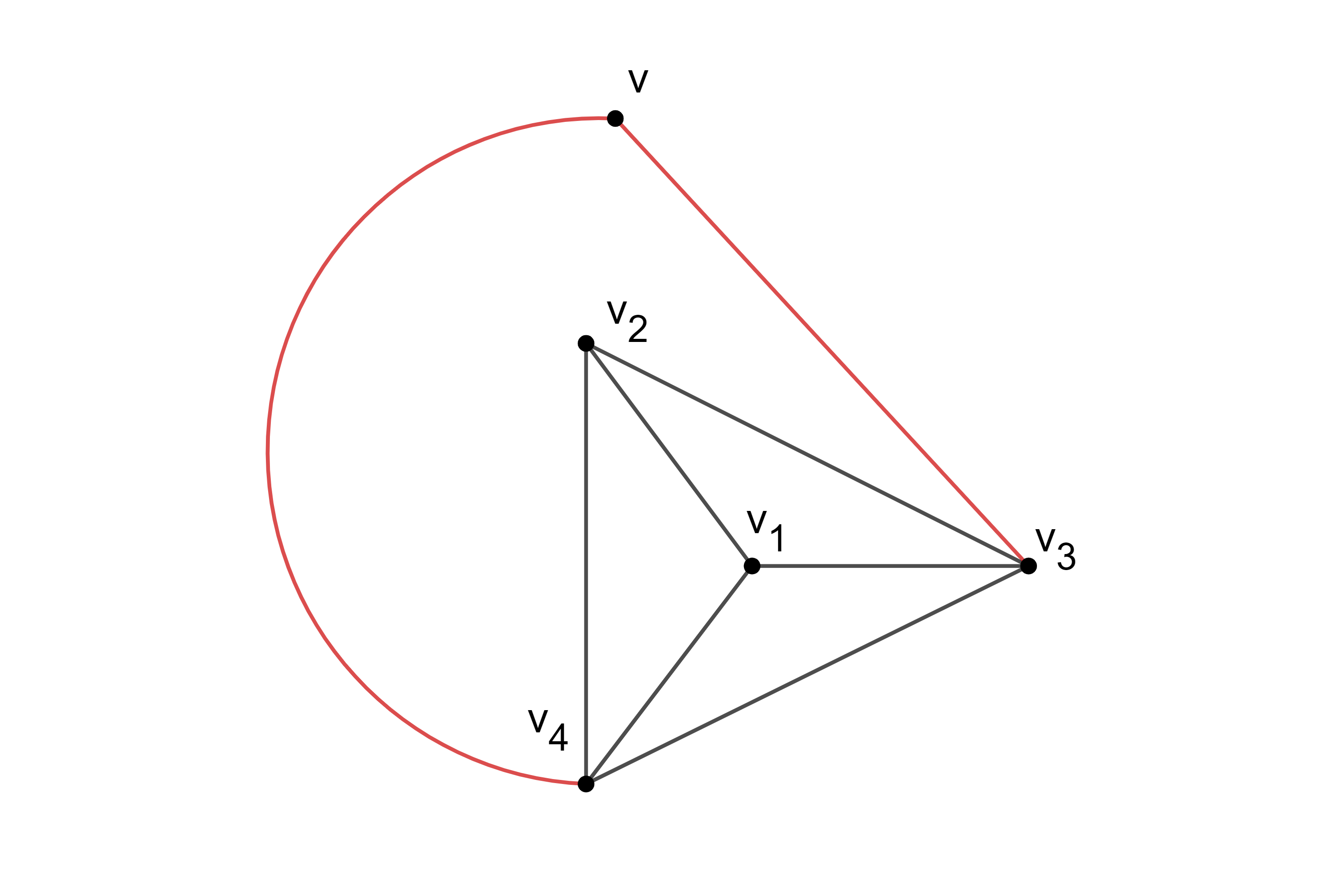}
		\caption{On the outside face, $v_2$ is indistinguishable from any other inner points of $v_3v_2$ and $v_2v_4$. Therefore if we draw anything new on the outer face, $v_2$ will be on the boundary of a face, where it functions as a non-vertex point, making this a non-triangular face. Note that since both $v_1,v_2$ are degree $3$, none of the four faces can have new vertices.}
		\label{fig:indist}
	\end{center}
 \end{figure}

\begin{lemma}\label{mainlemma1}
    Let $G$ be a maximal planar graph on $n$ vertices. 
    If $n\geq 47$, then there is a vertex of degree at least $4$ and less than $\frac n2$.
\end{lemma}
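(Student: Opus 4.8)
The plan is to argue by a double-counting / averaging contradiction: suppose no such vertex exists. Then every vertex of $G$ has degree $3$ or degree at least $\frac{n}{2}$. Let $S$ be the set of vertices of degree at least $\frac{n}{2}$ and let $T = V \setminus S$ be the set of degree-$3$ vertices, with $|S| = s$ and $|T| = t = n - s$. Using $\sum_v \deg(v) = 2|E(G)| = 6n - 12$, I would derive $s \cdot \frac{n}{2} \le \sum_{v \in S} \deg(v) \le 6n - 12 - 3t = 6n - 12 - 3(n-s)$, which rearranges to $s\left(\frac{n}{2} - 3\right) \le 3n - 12$, so $s \le \frac{3n-12}{n/2 - 3} = \frac{6(n-4)}{n-6}$. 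For $n$ large this forces $s \le 5$ (indeed $s < 6$ once $n \ge $ a small constant), so $G$ has at most a bounded number of high-degree vertices and essentially all vertices have degree exactly $3$.

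Next I would leverage Lemma~\ref{no3pair}: no two degree-$3$ vertices are adjacent. Hence $T$ is an independent set. Since $G$ is maximal planar and $n \ge 5$, $G$ is $3$-connected, so every vertex of $T$ has its three neighbours in $S$; moreover by Lemma~\ref{cycle} those three neighbours form a triangle in $G$. So each degree-$3$ vertex sits inside a triangular face of $G[S]$ (the subgraph induced on the high-degree vertices). Counting edges incident to $T$: there are exactly $3t$ such edges, and they all land in $S$, so $\sum_{v \in S} \deg(v) \ge 3t = 3(n-s)$; combined with $\deg(v) \le n-1$ for each of the $s$ vertices of $S$ this gives $s(n-1) \ge 3(n-s)$, which is automatically true and not yet a contradiction — so the contradiction must come from planarity of the structure, not just degree sums. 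The key point is that $G[S]$ is itself planar on only $s \le 5$ vertices, hence has at most $3s - 6 \le 9$ edges and at most $2s - 4 \le 6$ faces; each degree-$3$ vertex of $T$ must be placed in a distinct "slot," but several degree-$3$ vertices could share a face of $G[S]$ only by being nested, and nesting is obstructed because an outer degree-$3$ vertex $u$ with neighbourhood triangle $xyz$ would separate an inner degree-$3$ vertex from part of that triangle — more carefully, if $u, u' \in T$ both have neighbourhood triangle $\{x,y,z\}$ then $\{x,y,z\}$ is a $3$-cut separating $u$ from $u'$ and from the rest of $G$, and maximality forces the two sides of this cut to be just single vertices, so at most two degree-$3$ vertices share any given triangle, and in fact each triangle of $G[S]$ hosts at most ... — I would quantify this to bound $t$ by a constant multiple of (number of triangles in $G[S]$) $\le 2s - 4 \le 6$, hence $n = s + t \le 5 + O(1)$, contradicting $n \ge 47$.

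I would clean up the last step as follows. Each $v \in T$ has a neighbourhood triangle $N(v) = \{a_v, b_v, c_v\} \subseteq S$ which bounds a face of $G[S \cup \{v\}]$; distinct $v, v'$ either have the same triangle or different ones. If $N(v) = N(v')$ then $\{a_v,b_v,c_v\}$ is a separating triangle with $v$ on one side and $v'$ on the other (since both are degree-$3$, each side contains exactly that one vertex plus nothing else, or the graph would not be maximal with the remaining vertices elsewhere) — so at most two vertices of $T$ can share a triangle, and they must be "the inside and the outside." Since $G[S]$ has at most $2s-4$ faces and each can correspond to at most two such triangles-worth of $T$-vertices, $t \le 2(2s-4) = 4s - 8$. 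Then $n = s + t \le 5s - 8 \le 5 \cdot 5 - 8 = 17 < 47$, a contradiction. (I would double-check the exact constants; the slack between $17$ and $47$ suggests the paper uses a cruder but safer bound, perhaps not optimizing $s \le 5$, or handles the separating-triangle count more loosely — e.g. just $t \le \binom{s}{3} \le \binom{5}{3} = 10$, giving $n \le 15$, still far below $47$; the threshold $47$ likely arises from wanting the final inequality $\frac{6(n-4)}{n-6} < 6$ together with some additional slack in a later application, so I would keep the argument robust rather than tight.)

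\textbf{Main obstacle.} The delicate part is the planarity/structural step bounding how many degree-$3$ vertices can "hide" in the induced subgraph $G[S]$: the degree-sum argument alone does not give a contradiction, so one genuinely needs that a shared neighbourhood triangle of two degree-$3$ vertices is a separating triangle and then that maximality caps what lives on each side. I expect the cleanest route is to first nail down $s \le 5$ (or even just $s = O(1)$), then observe every degree-$3$ vertex lies in a triangular face of the bounded planar graph $G[S]$, and finally invoke the separating-triangle observation to get $t \le c \cdot s = O(1)$, whence $n = O(1)$, contradicting $n \ge 47$.
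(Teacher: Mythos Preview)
Your overall strategy is exactly the paper's: assume every vertex has degree $3$ or $\ge n/2$, bound the number $s$ of high-degree vertices by a degree-sum inequality, use Lemma~\ref{no3pair} to see every degree-$3$ vertex has all three neighbours in $S$, and then use a ``no three degree-$3$ vertices share the same neighbourhood triple'' argument (the paper phrases this as $K_{3,3}$-freeness via Kuratowski; your separating-triangle version is equivalent) together with pigeonhole to bound $t$.

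There are two arithmetic slips you should fix. First, from $s\bigl(\tfrac{n}{2}-3\bigr)\le 3n-12$ you get $s\le \tfrac{6(n-4)}{n-6}=6+\tfrac{12}{n-6}$, which is \emph{always strictly greater than $6$} for $n>6$; so the conclusion is $s\le 6$, not $s\le 5$. Second, your ``at most two degree-$3$ vertices per triple'' gives $t\le 2\binom{s}{3}$, not $t\le \binom{s}{3}$. With the correct values $s\le 6$ and $t\le 2\binom{6}{3}=40$ you obtain $n=s+t\le 46$, which is precisely why the hypothesis is $n\ge 47$ --- the slack you were puzzling over disappears once the constants are right.

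Finally, the intermediate bound $t\le 2(2s-4)=4s-8$ via ``faces of $G[S]$'' is not justified as stated: $G[S]$ need not be a triangulation, several degree-$3$ vertices can sit in the same face of $G[S]$, and a face boundary need not be a single triangle. Drop that line and go straight to the $2\binom{s}{3}$ pigeonhole count, which is what the paper does.
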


\begin{proof}

    Suppose otherwise that $G$ is a graph on $n\geq47$ vertices, and each of its vertices has degree $3$ or $\geq \frac n2$. Let it have $k_1$ vertices of degree $3$, and $k_2$ vertices of degree $\geq \frac n2$. We know that $k_1+k_2=n$.
    
    We also know from double counting the number of edges that $3k_1+\frac n2 k_2\leq 6n-12$. This implies $k_2\leq 11$. Then $k_1\geq n-11$. 
    Since $\frac n2 k_2\leq 6n-12-3k_1\leq3n+21$, we also know that $k_2\leq 6$.

    By Lemma \ref{no3pair}, all degree $3$ vertices are only adjacent to large degree vertices. By Kuratowski's theorem \cite{kuratowski1, kuratowski2}, the graph cannot contain a $K_{3,3}$. Therefore $G$ cannot have $3$ different degree $3$ vertices, connected to the same three large degree vertices. But since $k_2\leq6$, then if $k_1\geq 2\binom{6}{3}+1=41$, then by pigeonhole principle there will be $3$ such vertices, contradicting planarity of $G$. As $n\geq 47$ and $k_2\leq6$, $k_1\geq 41$ must hold. Therefore $G$ cannot have only degree $3$ or degree $\geq\frac n2$ vertices.
\end{proof}

\begin{lemma}\label{mainlemma2}
If $G$ has a degree $k\geq4$ vertex, then $G$ has a labeled plane-saturated subgraph $H$ with at most $\leq 2k+\frac6kn-\frac{12}{k}-4$ edges.
\end{lemma}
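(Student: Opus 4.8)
The plan is to build $H$ around the given degree-$k$ vertex $v$. By Lemma \ref{cycle} the neighbourhood $N(v)$ forms a cycle $C=u_1u_2\cdots u_ku_1$ in $G$. I start $H$ with the \emph{wheel} $W_k$ on $\{v\}\cup N(v)$, i.e.\ the $k$ spokes $vu_i$ together with the $k$ edges $u_iu_{i+1}$ of $C$; this is $2k$ edges. Drawn so that $v$ lies in the disc bounded by $C$, the vertex $v$ is incident only to the $k$ triangular faces $vu_iu_{i+1}$, so $v$ is already saturated, and any chord $u_iu_j\in E(G)$ of $C$ can only be inserted in the single outer face of $W_k$. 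I then add to $H$, inside that outer face, a maximal non-crossing family of chords of $C$ that lie in $E(G)$; after this no chord of $C$ can be added, and $H$ has at most $3k-3$ edges. So far everything in $N[v]$ is taken care of; what remains is to lock the $n-k-1$ vertices of $R:=V\setminus N[v]$ using as few further edges as possible.

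The second and harder part is to place the vertices of $R$ cheaply. The guiding principle is that a vertex $w$ with no incident edge of $H$ is automatically locked once it is drawn in a face $F$ whose boundary contains no $G$-neighbour of $w$, and that several such vertices may share a face $F$ as long as they are pairwise non-adjacent in $G$. The triangular pocket $vu_iu_{i+1}$ has boundary $\{v,u_i,u_{i+1}\}$, and since no vertex of $R$ is adjacent to $v$, a vertex $w\in R$ fits in pocket $i$ precisely when $w\notin N_G(u_i)\cup N_G(u_{i+1})$. I would show, by a greedy assignment, that every $w\in R$ of degree small compared with $k$ can be put into one of the $k$ pockets in a globally consistent way (each $w$ avoids the at most $2\deg_G(w)$ pockets that are forbidden or already occupied by a $G$-neighbour, which is possible once $\deg_G(w)$ is below a suitable fraction of $k$), contributing no new edges. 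By the handshake inequality $\sum_{w}\deg_G(w)<6n$ only $O(n/k)$ vertices of $R$ have degree too large to be pocketed; these ``exceptional'' vertices are drawn in the outer face and locked by a small auxiliary gadget on $O(n/k)$ edges. Adding the $2k$ wheel edges, the $\le k-3$ chords, and the $O(n/k)$ auxiliary edges, and optimising the various thresholds, is designed to yield the stated bound $2k+\frac6kn-\frac{12}{k}-4$.

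The main obstacle is this second part. Proving that the ordinary vertices of $R$ can actually be assigned to pockets is a system-of-distinct-representatives / list-colouring style statement whose feasibility has to be extracted carefully from the degree bounds; and the exceptional vertices are genuinely delicate, because a vertex adjacent to a large arc of $C$ cannot be pocketed and the naive ``fan'' that would lock it is far too expensive, so one must exploit that such vertices are few and that they can be grouped and enclosed together using only $O(n/k)$ edges. Verifying that $v$, the cycle $C$ with its chords, the pocketed vertices, and the exceptional gadget together form a genuinely saturated drawing, and that the edge count closes up to exactly $2k+\frac6kn-\frac{12}{k}-4$, is the concluding routine-but-careful computation.
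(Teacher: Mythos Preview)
Your plan is very different from the paper's, and its second half does not close. The paper's argument is short and avoids everything you find hard. After drawing the wheel on $\{v\}\cup N(v)$ ($2k$ edges), the paper applies the Four Colour Theorem to $G$ and lets $X_1,X_2,X_3,X_4$ be the resulting colour classes restricted to $R=V\setminus N[v]$. Since $k\ge4$ there are at least four triangular pockets $vu_iu_{i+1}$, and each entire class $X_j$ is dropped into its own pocket. Because every $X_j$ is independent in $G$, no edge of $G$ can be drawn inside a pocket, and vertices in different pockets are separated by the wheel; hence the only edges that can still be added run from some $u_i$ to a vertex of $R$ sitting in one of the two pockets bordering $u_i$. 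Summed over the $k$ cyclic rotations of this placement, every such edge of $G$ is counted exactly twice, and there are at most $3n-6-2k$ of them in $G$; by averaging, some rotation admits at most $\frac{6n-12-4k}{k}$ of these edges. Drawing them on top of the wheel gives $2k+\frac{6}{k}n-\frac{12}{k}-4$. There is no greedy assignment, no list colouring, no exceptional set, and no auxiliary gadget: all of $R$ is handled uniformly, and one simply \emph{pays} for whatever $u_i$--$R$ edges the best rotation allows rather than trying to drive that number to zero.

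The concrete failure in your outline is the budget. You spend up to $k-3$ edges on outer-face chords and then an unspecified $O(n/k)$ on the gadget, aiming for a total of $2k+\frac{6n-12}{k}-4$. But in the paper's application of this lemma $k$ ranges up to $\lfloor (n-1)/2\rfloor$, where the entire surplus $\frac{6n-12}{k}-4$ is a bounded constant; even a single chord already overshoots, and $k-3$ chords overshoot by $\Theta(n)$. Nothing in your sketch absorbs this cost. The gadget is equally problematic: a vertex $w\in R$ with many neighbours on $C$, placed in the outer region, lies on a face whose boundary meets many of those neighbours, and saturating it without paying for all of them would require enclosing $w$ in a cell whose boundary misses $N_G(w)$ --- exactly the pocketing condition you could not meet for $w$ in the first place. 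With threshold $k/2$ the handshake bound gives on the order of $12n/k$ exceptional vertices, already twice the $6n/k$ you can afford before any per-vertex gadget cost is added. The paper's four-colouring plus rotation trick dissolves both obstacles simultaneously: every vertex of $R$, regardless of degree, goes inside the wheel, and the averaging bounds the total price directly.
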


\begin{proof}
    Let $v$ be a degree $k$ vertex of $G$. As a start, draw the wheel formed by $v$, its neighbours $v_1, ...,v_k$, and the cycle on its neigbours. We can draw this wheel such that we decide which face contains each of the remaining labeled points, as the plane can be continuously transformed between these positions. The wheel contains $k\geq 4$ inner faces. We apply the $4$-color theorem \cite{4color1} to define a $4$-colouring $\chi$ on the vertices of $G$. We only care about the colours of the vertices in $G\verb|\|\{v, v_1, v_2, ..., v_k\}$ in the restriction of $\chi$, so let the four colour classes in this vertex set be $X_1, X_2, X_3, X_4$.

    Place these vertex sets in separate inner faces of the $k$-wheel already drawn. We can do that, since $k\geq 4$. This way, no further edges can be drawn between these $n-k-1$ vertices. We just need to count the maximum number of edges between a $v_i$ and some element of $X_j$. Note that we had a choice of how to put these classes in the faces; we consider all $k$ possible placement orders we can get by rotating the colour classes around $v$, by moving each colour class clockwise to the next region around $v$. See Figure \ref{placement} for clarity.
    Let us count the number of edges between some $v_i$ and $X_j$ altogether in these $k$ different placements. If we add them up, we count every single such edge twice. The total number of such edges is at most $3n-6-2k$, hence we counted at most $6n-12-4k$ edges. We choose the one rotation where this number of edges is minimal, hence we can choose one where we draw at most $\frac{6n-12-4k}{k}=\frac6kn-\frac{12}{k}-4$ new edges by pigeonhole principle. Together with the $k$-wheel, we have drawn $2k+\frac6kn-\frac{12}{k}-4$ edges, and the drawing is labeled plane-saturated. Hence there is a labeled plane-saturated subgraph with at most $ 2k+\frac6kn-\frac{12}{k}-4$ edges, which we wanted to prove.
    
\end{proof}

\begin{figure}
	\begin{center}
		\includegraphics[scale=0.8]{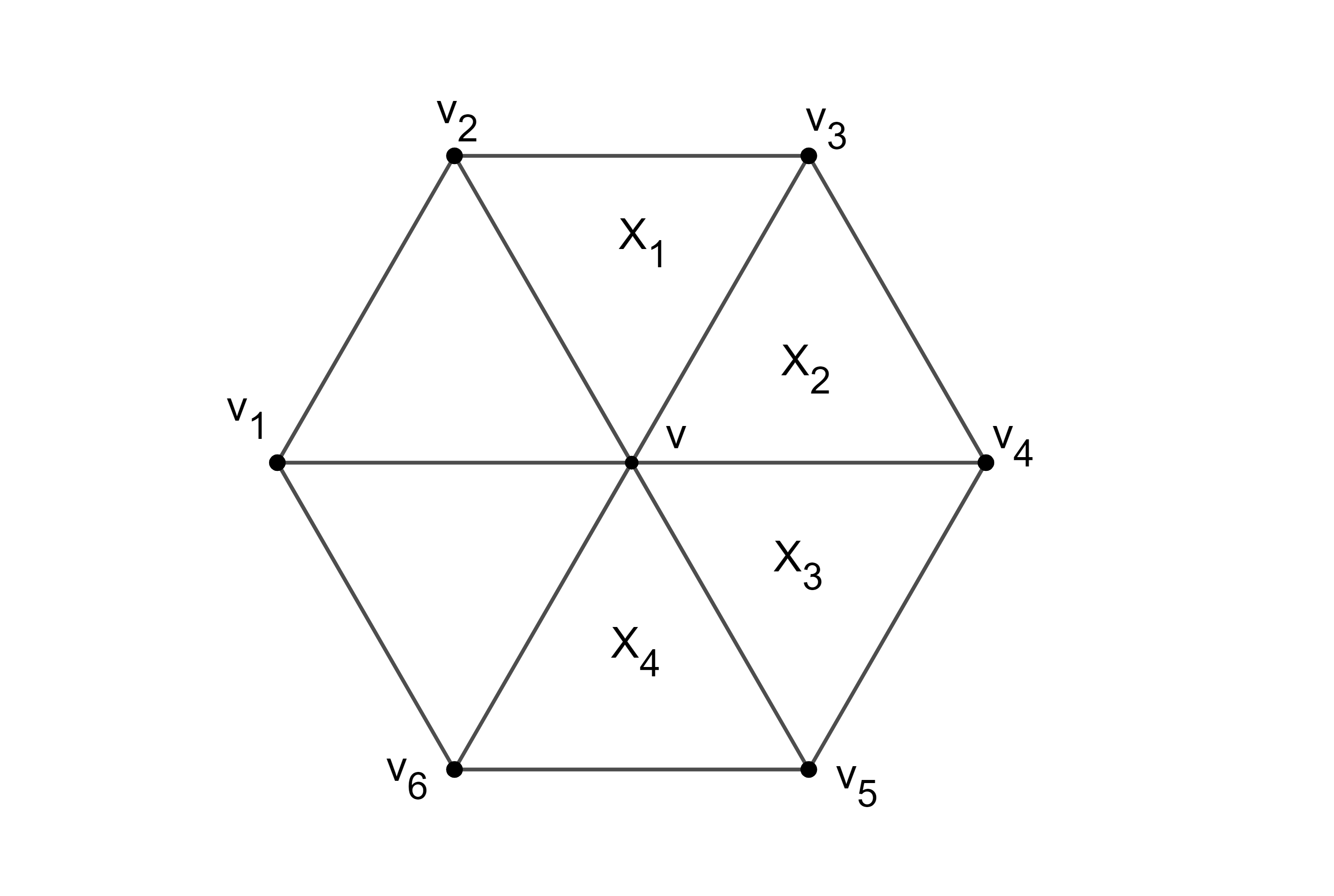}
		\caption{A 6-wheel around $v$, and one placement of the colour classes in the faces. We consider all $6$ possible placements of them in the faces we can get by a rotation around $v$, and take the one that induces the minimum number of edges between some $v_i$ and elements of some $X_j$.}
		\label{placement}
	\end{center}
 \end{figure}

 \begin{lemma}\label{k=4}
     If $G$ has a degree $4$ vertex, then it has a labeled plane-saturated subgraph with at most $n+5$ edges.
 \end{lemma}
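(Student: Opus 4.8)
The plan is to specialise the wheel‑plus‑four‑colouring argument of Lemma~\ref{mainlemma2} to $k=4$, replacing the crude bound it uses for the number of edges leaving the wheel by an estimate adapted to degree‑$4$ vertices. Fix a degree‑$4$ vertex $v$. By Lemma~\ref{cycle} its neighbours can be labelled $v_1,v_2,v_3,v_4$ so that $v_1v_2,v_2v_3,v_3v_4,v_4v_1\in E(G)$, and in a plane drawing of $G$ the four triangles $vv_iv_{i+1}$ are exactly the faces on one side of the $4$‑cycle $v_1v_2v_3v_4$; hence the remaining $n-5$ vertices all lie on the opposite side. Let $\Sigma=\sum_{i=1}^{4}\deg_G(v_i)$, let $c\in\{0,1\}$ be the number of chords of this $4$‑cycle lying in $E(G)$ (there is at most one, since two would cross), and let $m$ be the number of edges of $G$ joining $\{v_1,\dots,v_4\}$ to the remaining $n-5$ vertices. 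Counting incidences gives $m=\Sigma-12-2c$, so the lemma will follow once we know $m\le 2n-8$, which I deduce from $\Sigma\le 2n+4$ when $c=0$ and $\Sigma\le 2n+6$ when $c=1$.

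To bound $\Sigma$ I analyse the triangulated region on the far side of the $4$‑cycle. When $c=0$ this is a single triangulated disk $D$ with boundary the $4$‑cycle $v_1v_2v_3v_4$ and $n-5$ interior vertices, and $\deg_G(v_i)=1+\deg_D(v_i)$, so $\Sigma=4+\sum_i\deg_D(v_i)$. When $c=1$, say $v_1v_3\in E(G)$, the region is cut along $v_1v_3$ into two triangulated disks with boundary triangles $v_1v_2v_3$ and $v_1v_3v_4$, and summing the four degrees (correcting for the twice‑counted chord $v_1v_3$ and the four edges to $v$) again expresses $\Sigma$ through the boundary degrees of these disks. In both cases the required bound follows from the \emph{disk lemma}: in a triangulated disk with $h\in\{3,4\}$ boundary vertices and $p$ interior vertices, the boundary vertices have degree‑sum at most $2p+3h-2$ (equivalently, the interior vertices have degree‑sum at least $4p+h-4$). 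I expect this disk lemma to be the crux. I would prove it by induction on $p$, with the base $p=0$ immediate. For the step, choose an interior vertex $z$ with at most two neighbours on the boundary, delete it, and re‑triangulate the resulting $\deg(z)$‑gonal hole with $\deg(z)-3$ chords, obtaining a triangulated disk with the same boundary and $p-1$ interior vertices. Each boundary vertex loses at most the one edge to $z$ and may gain only edges from the new chords, so the boundary degree‑sum of the original disk exceeds that of the smaller disk by at most the number $j\le 2$ of boundary neighbours of $z$; with the inductive bound for the smaller disk this gives the claim. A short argument shows such a $z$ exists once $p$ exceeds a small absolute constant: for $h\le 4$, an interior vertex adjacent to three or more boundary vertices is forced to sit in a specific corner of the disk, so there are only $O(1)$ of them. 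The finitely many disks with $p$ below that constant are checked directly.

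Granting $m\le 2n-8$, the construction mirrors Lemma~\ref{mainlemma2}. Draw the $4$‑wheel on $v,v_1,\dots,v_4$ (this uses $8$ edges), and if $c=1$ add the chord through the outer region (one more edge). By the four colour theorem, $4$‑colour $G$, and let $X_1,X_2,X_3,X_4$ be the colour classes of $V\setminus\{v,v_1,\dots,v_4\}$. For each of the four cyclic rotations, place the classes into the four inner faces $F_i=vv_iv_{i+1}$; in a fixed rotation the only edges of $G$ one is forced to add are those from a $v_i$ to the colour class occupying a face incident to $v_i$, after which no further edge of $G$ can be added to the drawing. Summing over the four rotations counts every edge from $\{v_1,\dots,v_4\}$ to the outside exactly twice, so some rotation forces at most $m/2$ edges and the whole drawing uses at most $8+c+m/2\le n+5$ edges. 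It is labelled plane‑saturated: a colour class is independent and so spans no edge of $G$, two different colour classes lie in distinct faces of the wheel so no edge between them can be drawn, $v$ already has all four of its edges, and every edge of $G$ from some $v_i$ into a colour class sharing a face with it has been inserted. This proves the lemma; note that, in contrast to the other parts of Section~\ref{sec3}, the argument does not use $n\ge 47$.
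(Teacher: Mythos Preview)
Your approach is correct and takes a genuinely different route from the paper's. Both share the wheel--plus--four-colouring--plus--rotation framework of Lemma~\ref{mainlemma2}; they diverge only in how they bound $m$, the number of $G$-edges from $\{v_1,v_2,v_3,v_4\}$ to the remaining $n-5$ vertices. The paper uses a short pigeonhole argument on $K_{3,3}$-forbidden ``triplets'': since $v$ already contributes four of the at most eight admissible triplets, at most four outside vertices can have three or more neighbours among the $v_i$, yielding $m\le 4\cdot 3+(n-9)\cdot 2=2n-6$ and hence a best rotation with at most $n-3$ forced edges. You instead prove a structural \emph{disk lemma} on boundary degree sums and obtain the sharper $m\le 2n-8$, while also explicitly tracking the possible diagonal of the $4$-cycle via the parameter $c$ --- bookkeeping the paper does not make explicit. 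Your route costs an auxiliary lemma but gives a tighter edge count and a cleaner treatment of the $c=1$ case (where the chord must be drawn through the outer face to achieve saturation); the paper's route is considerably shorter and needs nothing beyond Kuratowski.

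Two points in your disk-lemma induction deserve care. First, the bound $2p+3h-2$ \emph{fails} once a boundary chord is present (for $h=4$, $p=1$ with chord $v_1v_3$ the boundary sum is $13>12$), so when you delete $z$ and re-triangulate you must avoid creating one. This is always possible: if the two boundary neighbours of $z$ are opposite on the $4$-cycle, they are non-adjacent in the link of $z$ (an adjacency there would already be a chord), so one may fan from an interior vertex of the link instead. Second, the hand-waved finite check for small $p$ can be replaced outright: for chord-free disks your inequality is equivalent to $e_{ii}\ge p-1$, i.e.\ connectedness of the subgraph induced on interior vertices. This holds because (absent chords) every inner triangle contains an interior vertex, so two components of the interior would yield two sets of inner triangles with no shared interior edge, contradicting connectedness of the dual of the triangulation.
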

\begin{proof}
    We use the same drawing as in the previous lemma, but give a more careful analysis. We show that in the graph $G$, we cannot have more than $2n-6$ edges going between elements of some $X_i$ and some vertex $v_j$ of the wheel. Each vertex of an $X_i$ can be connected to $0,1,2,3$ or $4$ different vertices $v_i$. If there are three vertices that are connected to the same three $v_i$'s, we get a contradiction by Kuratowski's theorem and by the planarity of $G$, as it would introduce a $K_{3,3}$. We now count edge triplets joining a vertex to $3$ different $v_i$'s. A vertex joined to all $4$ of them imposes $4$ triplets. Since there are $4$ outer vertices of the wheel, if the graph has at least $2\binom {4}{3}+1=9$ triplets, then by pigeonhole principle there will be three vertices joined to the same three $v_i$'s, causing a $K_{3,3}$. $v$ is already joined to all $4$ $v_i$'s, causing $4$ triplets. The coloured vertices (those in the $X_i$'s) can only cause at most $4$ more triplets, and the other coloured vertices have at most $2$ neighbours among $v_1,v_2,v_3,v_4$. There are more potential edges if these four triplets are caused by separate degree $3$ vertices instead of one degree $4$ vertex. This gives at most $4*3+(n-5-4)*2=2n-6$ edges between the wheel and the coloured vertices.

    Now we use the same calculation as in the previous lemma, but now double counting gives $4n-12$ edges instead of the $6n-12-4k$. Now we can choose a rotation which adds at most $n-3$ new edges to the wheel, therefore $G$ has a labeled plane-saturated subgraph with at most $n-3+8=n+5$ edges.
\end{proof}

\begin{lemma}\label{k=5}
     If $G$ has a degree $5$ vertex, then $f(G)\leq \frac{4}{5}n+\frac{46}{5}$.
 \end{lemma}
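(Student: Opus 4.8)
The plan is to mimic the argument of Lemma~\ref{k=4} (which treats degree $4$), but now with a degree $5$ vertex $v$ and a $5$-wheel. As before, I would fix a plane drawing of the wheel $W$ on the $6$ vertices $v,v_1,\dots,v_5$ (with the cycle $v_1v_2\cdots v_5v_1$ on the rim), apply the $4$-colour theorem to $G$, restrict the colouring to $V(G)\setminus\{v,v_1,\dots,v_5\}$ to get four colour classes $X_1,X_2,X_3,X_4$, and stuff these classes into four of the five inner faces of $W$. This kills all edges among the $n-6$ coloured vertices, so the only edges of $G$ that might still need to be drawn are those between a rim vertex $v_i$ and a coloured vertex; call the number of such edges in $G$ itself $E^\ast$. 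Averaging over the five rotations of the colour classes around $v$ (each edge $v_i$--$X_j$ is counted in exactly one of the five placements for a given $j$, hence four of the five overall... more carefully: each such edge appears in exactly $1$ placement per class position, and there are $5$ placements, so summing the counts over the $5$ rotations counts each edge exactly $4$ times), the best rotation needs at most $\frac{4E^\ast}{5}$ extra edges, and then $f(G)\le 10+\frac{4E^\ast}{5}$.

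The crux, exactly as in Lemma~\ref{k=4}, is bounding $E^\ast$, the number of $v_i$--(coloured vertex) edges in $G$. The trivial bound is $E^\ast\le 3n-6-2\cdot 5 = 3n-16$ (total edges minus the $10$ edges of the wheel), which only gives $f(G)\le 10+\frac{4(3n-16)}{5}=\frac{12}{5}n-\frac{14}{5}$, far too weak. Instead I would use the Kuratowski/$K_{3,3}$ obstruction: in the planar graph $G$ no three coloured vertices can share three common neighbours among $\{v_1,\dots,v_5\}$. So, counting for each coloured vertex $u$ the number $\binom{\deg_W(u)}{3}$ of triples of rim-neighbours it realizes (where $\deg_W(u)\le 5$ is the number of $v_i$ adjacent to $u$), the total number of such triples is at most $2\binom{5}{3}=20$ (at most two coloured vertices per triple of rim vertices, $\binom{5}{3}=10$ triples). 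A vertex with $\deg_W(u)=d$ contributes $d$ to $E^\ast$ and $\binom{d}{3}$ to the triple count; a degree-$3$-in-$W$ vertex contributes $3$ edges and only $1$ triple, which is the most edge-efficient way to use up the triple budget, while vertices with $\deg_W(u)\le 2$ contribute at most $2$ edges and $0$ triples. Hence the number of coloured vertices with $\deg_W(u)\ge 3$ is at most $20$ (and each contributes at most... one must be careful: a degree-$5$ vertex gives $10$ triples, a degree-$4$ vertex $4$ triples, a degree-$3$ vertex $1$ triple), and the extremal configuration maximizing $E^\ast$ is to have as many degree-$3$-in-$W$ coloured vertices as the budget of $20$ triples allows, namely $20$ of them, contributing $20\cdot 3 = 60$ edges, with all remaining $n-6-20 = n-26$ coloured vertices contributing at most $2$ edges each. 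This yields
\[
E^\ast \le 20\cdot 3 + (n-26)\cdot 2 = 2n+8.
\]

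Plugging in, $f(G)\le 10 + \frac{4}{5}(2n+8) = 10 + \frac{8n+32}{5} = \frac{8n+82}{5} = \frac{8}{5}n + \frac{82}{5}$ — which, I note, is not yet the claimed $\frac{4}{5}n+\frac{46}{5}$, so the real work is a sharper accounting of $E^\ast$. The improvement must come from two sources that parallel the refinements hinted at in Lemma~\ref{k=4}: first, the $K_{3,3}$-freeness is more restrictive than "$\le 2$ coloured vertices per rim-triple" because those two vertices, together with the wheel, sit inside a planar graph with further structure (in particular one can often only afford $\le 1$ extra coloured vertex realizing a given triple once $v$ already realizes many of them, since $v$ itself is adjacent to all five $v_i$); and second, not all four colour classes are placed adversarially — we only place $X_1,X_2,X_3,X_4$ into four faces, and by a more careful choice of which four faces and which rotation, plus using that a coloured vertex of $W$-degree $\ge 3$ actually forces the presence of long paths along the rim cycle that block yet more edges, one drives the constant down. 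The main obstacle, then, is precisely this tightened bound on $E^\ast$: getting from the crude $2n+O(1)$ down to the $n+O(1)$ needed (so that $f(G)\le 10+\frac{4}{5}(n+O(1))=\frac{4}{5}n+\frac{46}{5}$ forces $E^\ast\le n-\tfrac{13}{2}$-ish, i.e. essentially $E^\ast \le n + c$), which will require combining the $K_{3,3}$ exclusion with the rim-cycle adjacencies guaranteed by Lemma~\ref{cycle} and with a per-face rather than per-rotation averaging. Everything else — the wheel drawing, the $4$-colouring, the averaging over $5$ rotations, and the final arithmetic — is routine.
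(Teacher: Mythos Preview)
Your overall plan --- wheel drawing, $4$-colouring, rotation averaging, and the $K_{3,3}$ bound on the rim-edge count $E^\ast$ --- is exactly the paper's approach. But two counting errors lead you to the wrong bound and then to chase a phantom refinement.

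First, the rotation multiplicity is $2$, not $4$. Over the five rotations the class $X_j$ occupies each of the five inner faces exactly once; since each rim vertex $v_i$ borders exactly two of those faces, an edge $(v_i,u)$ with $u\in X_j$ is drawable in precisely two of the five rotations. Summing over rotations therefore gives $2E^\ast$, not $4E^\ast$, and the best rotation needs at most $\tfrac{2E^\ast}{5}$ extra edges. This is the same ``every such edge twice'' already used in Lemma~\ref{mainlemma2} (and in Lemma~\ref{k=4}), which you cite but then miscount for $k=5$.

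Second, your triple budget for the coloured vertices is $10$, not $20$. The centre $v$ is itself adjacent to all five rim vertices, so $v$ already realizes every one of the $\binom{5}{3}=10$ rim-triples. Since any fixed triple can be realized by at most two vertices of $G$ in total (else a $K_{3,3}$), the \emph{coloured} vertices together realize at most $20-10=10$ triples. Maximizing $E^\ast$ under this budget gives at most $10$ coloured vertices of rim-degree $3$ and the remaining $n-16$ of rim-degree at most $2$, hence $E^\ast\le 10\cdot3+(n-16)\cdot2=2n-2$.

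With these two corrections the lemma falls out immediately:
\[
f(G)\le 10+\frac{2(2n-2)}{5}=\frac{4}{5}n+\frac{46}{5}.
\]
No sharpening of $E^\ast$ down to $n+O(1)$, no per-face averaging, and no rim-cycle structure is needed; your closing paragraph is hunting for machinery that the two fixes above make unnecessary.
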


\begin{proof}
    The proof is very similar to that of the previous lemma. We again start with drawing the $5$-wheel.  We show that the graph $G$ cannot have more than $2n-2$ edges going between elements of some $X_i$ and some vertex $v_j$ of the wheel. Each vertex of an $X_i$ can be connected to $0,1,2,3,4$ or $5$ different vertices $v_i$. If there are $3$ vertices  that are connected to the same three $v_i$'s, we get a contradiction by Kuratowski's theorem and by the planarity of $G$. But since there are $5$ outer vertices of the wheel, if we have at least $2\binom {5}{3}+1=21$ triplets of $v_i$'s joined to the same vertex, then by pigeonhole principle there will be three vertices joined to the same three $v_i$'s, causing a contradiction. $v$ already causes $\binom{5}{3}=10$ triplets, so from the extra vertices, at most $10$ more triplets can be imposed. The most additional edges are possible if these triplets are caused by different degree $3$ vertices, and the others have at most $2$ neighbours on the wheel. This gives at most $10*3+(n-6-10)*2=2n-2$ edges between the wheel and the coloured vertices.

    Now we can use the same calculation as in the previous lemma, but now double counting gives $4n-4$ edges instead of the $6n-12-4k$. Thus we can choose a rotation which adds at most $\frac{4}{5}n-\frac{4}{5}$ new edges to the wheel, so $G$ has a labeled plane-saturated subgraph with at most $\frac{4}{5}n-\frac{4}{5}+10=\frac{4}{5}n+\frac{46}{5}$ edges. 

\end{proof}

\begin{lemma}\label{k=6}
     If $G$ has a degree $6$ vertex, then $f(G)\leq \frac{2}{3}n+14$.
 \end{lemma}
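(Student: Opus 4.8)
The plan is to follow the proofs of Lemma~\ref{k=4} and Lemma~\ref{k=5} almost verbatim, changing only the combinatorial bookkeeping for $k=6$. Let $v$ be a vertex of $G$ of degree $6$; by Lemma~\ref{cycle} its neighbours can be written as $v_1,\dots,v_6$ with $v_iv_{i+1}\in E(G)$ (indices taken mod $6$). First I would draw the $6$-wheel $W$ on $v,v_1,\dots,v_6$, which uses $2\cdot 6=12$ edges, choosing the drawing as in Lemma~\ref{mainlemma2} so that every remaining vertex may be assigned to whichever of the six triangular faces $vv_iv_{i+1}$ we please. Then apply the $4$-colour theorem to $G$, restrict the colouring to $V(G)\setminus\{v,v_1,\dots,v_6\}$ to obtain colour classes $X_1,\dots,X_4$, and place them in four of the six faces. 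As in the earlier lemmas this forbids all edges among the $n-7$ non-wheel vertices, and since $v$ has degree $6$ none of those vertices is adjacent to $v$; hence the only edges left to account for run between a rim vertex $v_j$ and some colour class $X_i$.

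The key new computation is that $G$ contains at most $2n+6$ such rim--colour edges. Following Lemmas~\ref{k=4} and~\ref{k=5}, count \emph{triplets}: a vertex adjacent to $j$ of the $v_i$ contributes $\binom{j}{3}$ triples of rim vertices, and if some triple appeared in the neighbourhoods of three distinct vertices then $G$ would contain a $K_{3,3}$, contradicting planarity by Kuratowski's theorem. Hence the total number of triplets is at most $2\binom{6}{3}=40$, and since $v$ alone already contributes $\binom{6}{3}=20$ of them, the non-wheel vertices contribute at most $20$. A vertex with exactly three rim-neighbours produces three edges per triplet, whereas every larger rim-degree is strictly less edge-efficient, so the number of rim--colour edges is maximised by taking $20$ non-wheel vertices of rim-degree $3$ (contributing $60$ edges) and the remaining $n-27$ of rim-degree at most $2$, for a total of at most $20\cdot 3+(n-27)\cdot 2=2n+6$. (In the regime $n\ge 47$ in which the lemma is applied $n-27\ge 0$; smaller $n$ would only sharpen the bound.)

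Finally I would run the same averaging argument as before over the six placements obtained by cyclically rotating $X_1,\dots,X_4$ around $v$. A fixed edge $v_ju$ with $u\in X_i$ can be drawn in exactly the two rotations in which the face holding $X_i$ is incident to $v_j$, so summing over all six rotations counts every rim--colour edge of $G$ precisely twice, for a grand total of at most $2(2n+6)=4n+12$. By pigeonhole some rotation realises at most $\frac{4n+12}{6}=\frac{2}{3}n+2$ rim--colour edges, and the resulting drawing---no edges among the $X_i$, none to $v$, and every drawable rim--colour edge present---is labeled plane-saturated. Together with the $12$ wheel edges this is a labeled plane-saturated subgraph with at most $\frac{2}{3}n+14$ edges, i.e.\ $f(G)\le\frac{2}{3}n+14$.

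I expect the only point requiring care to be the edge-efficiency claim in the second paragraph: one must check that, given a budget of $20$ triplets, the maximum of the total rim--colour edge count over all admissible rim-degree sequences is attained by the $(3,\dots,3,2,\dots,2)$ profile described, and in particular that allowing rim-degrees $4$, $5$ or $6$ never helps. This is a short linear-programming check; apart from it the proof is a routine transcription of the $k=4$ and $k=5$ arguments.
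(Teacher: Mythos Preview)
Your argument is correct and essentially identical to the paper's proof: both draw the $6$-wheel, bound the rim--colour edges by $2n+6$ via the $K_{3,3}$/triplet count (at most $2\binom{6}{3}=40$ triplets in total, $20$ of which are used by $v$, yielding $20\cdot 3+(n-27)\cdot 2=2n+6$), and then average over the six rotations to obtain $\frac{2}{3}n+2$ extra edges on top of the $12$ wheel edges. The edge-efficiency check you flag is exactly the implicit step in the paper's ``this is the most possible edges we can add,'' and your marginal-cost observation (upgrading a vertex from rim-degree $2$ to $3$ costs one triplet per extra edge, while higher rim-degrees are strictly worse) is the right way to justify it.
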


 \begin{proof}
     The proof method is the same as in the previous lemma. In this case, Kuratowski's theorem gives that $2\binom {6}{3}+1=41$ triplets force a contradiction. Considering that the vertex $v$ adds $20$ triplets, we can have at most $20$ coloured vertices with three neighbours among the $v_i$'s and this is the most possible edges we can add. This gives at most $20*3+(n-7-20)*2=2n+6$ edges between the wheel and the coloured vertices.

After double counting the edges as before, and choosing the rotation with the fewest number of edges, we get a labeled plane-saturated subgraph with at most $\frac{4}{6}n+\frac{12}{6}+12=\frac{2}{3}n+14$ edges.
 \end{proof}

Now we are ready to prove Theorem \ref{upper}.

\begin{proof}[Proof of Theorem \ref{upper}]
    By Lemma \ref{mainlemma1}, $G$ has vertex $v$ of degree at least $4$ and less than $\frac{n}{2}$. 
    \begin{itemize}
        \item If $deg(v)=4$, then by Lemma \ref{k=4} there is a labeled plane-saturated subgraph with at most $n+5$ edges. 
        \item If $deg(v)=5$, then by Lemma \ref{k=5} there is a labeled plane-saturated subgraph with at most $\frac{4}{5}n+\frac{46}{5}\leq n+5$ edges since $n\geq 47$.
        \item If $deg(v)=6$, then by Lemma \ref{k=6} there is a labeled plane-saturated subgraph with at most $\frac{2}{3}n+14\leq n+5$ edges since $n\geq 47$.
        \item If $7\leq deg(v)\leq \frac {n-1}{2}$ then by Lemma \ref{mainlemma2}, and considering that the bound in the lemma has maximal value either at $k=7$ or $k=\frac n2$, there is a labeled plane-saturated subgraph with at most  $max(14+\frac67n+\frac{12}{7}-4,n-1+\frac{12n}{n-1}+\frac{24}{n-1}-4)=max(\frac{6}{7}n+\frac{82}{7},n+7+\frac{36}{n-1})< n+8$ edges. 
    \end{itemize}

    Dividing these edge numbers by $e(G)=3n-6$, and using that the number of edges is an integer, we get the upper bound $lpsr(G)\leq \frac{n+7}{3n-6}$ for $n\geq47$.

Theorem \ref{upper} is now proven.
\end{proof}

\section{The unlabeled problem}\label{sec4}

In this section we move our focus to the plane-saturation ratio. We show Theorem \ref{lowerpsr}.

\begin{theorem}\label{const3n/2}
    For all $n$, there exists a maximal planar graph $G$ with $n$ vertices where any plane-saturated subgraph has at least $3n/2-3$ edges.
\end{theorem}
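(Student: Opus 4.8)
The plan is to build a maximal planar graph $G$ on $n$ vertices in which every plane-saturated subgraph is forced to retain roughly $\tfrac32 n$ edges. The natural strategy is to make $G$ locally rigid: we want many small gadgets glued together so that, no matter how a plane-saturated subgraph $H$ is drawn, each gadget independently contributes a guaranteed number of edges. Concretely, I would take a long triangulated "path" of triangles — a strip of triangles $T_1,T_2,\dots$ where consecutive triangles share an edge — and then subdivide or decorate each triangle so that, within each block, saturation forces at least $3$ edges per $2$ new vertices. Summing over the $\Theta(n)$ blocks gives the $\tfrac32 n - 3$ bound. The graph $G$ should be built so that $e(G)=3n-6$ (it is maximal planar) and so that the combinatorial symmetry of the gadgets prevents a clever global drawing from "cheating" more than a constant amount.

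The key steps, in order, are: (1) Describe the construction explicitly — pick the base triangulation and the repeated gadget, verify it is maximal planar, and note $n$ and $e(G)=3n-6$. (2) Prove a local lemma: in any plane-saturated subgraph $H$ of $G$, restricted to a single gadget, the number of $H$-edges inside that gadget is at least (the target per-gadget count). This is where we use that $H$ is saturated: if too few edges were present locally, there would be a face whose boundary permits adding a missing $G$-edge without a crossing and without leaving $G$ as a host. The argument should parallel the $lpsr$ lower-bound proof in Section~\ref{sec2} — showing that a saturated $H$ must contain a spanning-ish cycle structure and extra chords — but now we must also argue that the \emph{unlabeled} freedom (permuting vertices, as in Figure~\ref{fig:not_plane-sat}) doesn't help, which is precisely why we need genuine structural rigidity rather than label-dependent obstructions. (3) Sum the local bounds over all $\Theta(n)$ disjoint gadgets, being careful about edges shared between adjacent gadgets (count them once, or design gadgets to be edge-disjoint in their "interiors"), to conclude $e(H)\geq \tfrac32 n - 3$. (4) Divide by $e(G)=3n-6$ to get $psr(G)\geq \tfrac12$, and optionally exhibit a matching saturated subgraph to show tightness of the construction.

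The main obstacle I anticipate is step (2) combined with the unlabeled subtlety: because $psr$ quantifies over \emph{isomorphic} subgraphs rather than labeled ones, a plane-saturated $H$ might embed into $G$ in a globally twisted way — the "abstract" graph $H$ sits inside the abstract graph $G$ via some nonobvious isomorphism onto a subgraph, and its drawing need not respect the natural planar structure of $G$. I must rule out that such an embedding lets one gadget's deficit be "absorbed" by another. The cleanest way around this is to choose gadgets that are rigid as abstract graphs — e.g., gadgets each containing a small subgraph that can only map to one place in $G$ (a vertex of a distinctive degree, or a uniquely-embeddable triangle) — so that any subgraph isomorphism $H \hookrightarrow G$ is forced to be close to the identity on the skeleton, reducing the unlabeled problem to essentially the labeled one block-by-block. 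A secondary technical point is handling the boundary/outer-face gadgets and any "leftover" vertices when $n$ is not a multiple of the gadget size; these contribute only an additive constant, which is why the bound is stated as $\tfrac32 n - 3$ rather than exactly $\tfrac32 n$.
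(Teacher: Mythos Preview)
Your proposal is a plan, not a proof, and the plan has a structural tension at its core that you identify but do not resolve. You want to argue locally, gadget by gadget, but plane-saturation is a global property of the drawing: vertices belonging to different gadgets can be scattered arbitrarily among the faces of $H$, and the only constraint is that no single missing edge of $G$ can be inserted. A local lemma of the form ``each gadget contributes at least $c$ edges to $H$'' therefore cannot be proved by looking at one gadget in isolation; you would need to control which face of $H$ each gadget-vertex lands in, and that is exactly the global information you are trying to avoid. Your proposed fix---make each gadget carry a distinctive subgraph so that any embedding $H\hookrightarrow G$ is pinned down---is in direct conflict with the construction: if the $\Theta(n)$ gadgets are copies of the same thing, the automorphism group of $G$ is large and no vertex is distinctive; if instead every gadget is different, you have not described the graph, and it is not clear the triangulation remains maximal planar with the right edge count, nor that the per-gadget bound survives the variation. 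Either way, step~(2) as stated is not a lemma but a wish.

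The paper's argument is entirely different and avoids gadgets. It reuses the single global graph $G$ from Section~\ref{sec2} (two apex vertices $v_1,v_2$ each joined to an $(n-2)$-cycle $v_3\cdots v_n$), observes that in any embedding of a plane-saturated $H$ into $G$ every cycle of $H$ passes through one of the two vertices playing the role of $v_1,v_2$ (unless the full $(n-2)$-cycle is present, which is handled separately), and then runs a global double count. Each of the $n-2$ ``equator'' vertices is classified by how many neighbours it has in $\{v_1,v_2\}$ and how many among $\{v_3,\dots,v_n\}$; writing out the edge count in these nine class sizes $A_i,B_i,C_i$ gives $\tfrac32(n-2)$ plus a correction term, and a short argument about the cyclic order of neighbours around $v_1$ (and around $v_2$) shows the correction is nonnegative. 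The unlabeled freedom is handled not by rigidity but by the fact that $G$ has only two high-degree vertices, so the roles of $v_1,v_2$ are determined up to a swap in any embedding, and the counting is symmetric in that swap. If you want to salvage your approach, the realistic route is to abandon locality and instead look for a global invariant of $H$ that bounds $e(H)$ from below; the paper's $A$--$B$--$C$ bookkeeping is one such invariant.
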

\begin{proof}
For $n\le 4$, it is trivial to check that the bound holds: if $n=1$, or $2$, a maximal planar graph is a tree so the only plane-saturated subgraph is the entire graph; if $n=3$ or $4$, a plane-saturated subgraph must include a cycle, and thus at least three edges.
    For $n\ge 5$, again consider the graph $G=(V,E)$ from Section 2, with $V=\{v_1,\dots,v_n\}$ and $E=\{(v_1,v_i)\mid 3\le i\le n\}\cup\{(v_2,v_i)\mid 3\le i\le n\}\cup\{(v_i,v_{i+1})\mid 3\le i\le n-1\}\cup \{(v_3,v_n)\}$.

    There are two possibilities for our plane-saturated subgraph $H$. Either there exists some embedding of $H$ into $G$ such that some $(n-2)$-cycle of $H$ corresponds to $v_3v_4\dots v_n$ or there doesn't. If some cycle corresponds to $v_3v_4\dots v_n$, then it means the cycle has no chords. Then, for each vertex on this cycle, we can add an edge from that vertex to one of the two vertices corresponding to $v_1, v_2$ without causing a crossing, unless that edge is already present. Thus, in order to be saturated, at least $n-2$  edges must be present besides those on the $(n-2)$-cycle, giving a total of at least $2n-4\ge 3n/2-3$ edges.

    Otherwise, consider an embedding of $H$ into $G$. The vertices corresponding to $v_1, v_2$ are such that every cycle of $H$ contains at least one of them. Every other vertex is either on a face or contained inside a face where one of these two vertices is on the boundary. As such, it is possible to add an edge between any other vertex and one of the vertices corresponding to $v_1, v_2$ without adding a crossing (unless that edge is already present). Thus, to be saturated, every other vertex must have at least one edge to one of the vertices corresponding to $v_1, v_2$. 
    
    We also want to keep track of the number of edges in the drawing between vertices corresponding to $v_3,\dots,v_n$. Each such vertex is incident to $0,1$, or $2$ such edges. Arbitrarily deciding which vertex of the drawing corresponds to $v_1$ and which to $v_2$, we define the counts $A_1,A_2,A_3,B_1,B_2,B_3,C_1,C_2$, and $C_3$ as follows. For $i=1,2$, $A_i$ is the number of vertices corresponding to $v_3,\dots,v_n$ that are adjacent only to the vertex corresponding to $v_i$ and thus to none of those corresponding to $v_3,\dots,v_n$. $A_3$ is the number of vertices corresponding to $v_3,\dots,v_n$ that are adjacent to both the vertices corresponding to $v_1,v_2$, but none of those corresponding to $v_3,\dots,v_n$. The $B_i$'s and $C_i$'s are defined analogously with the $B_i$'s counting vertices with one neighbour in $\{v_3,\dots,v_n\}$ and the $C_i$'s counting vertices with two neighbours in $\{v_3,\dots,v_n\}$.

    Thus, the total number of edges in the drawing is \begin{align*}
    &\frac{\sum_{i=1}^3 B_i+2\sum_{i=1}^3 C_i}{2}+(A_1+A_2+B_1+B_2+C_1+C_2)+2(A_3+B_3+C_3)\\
    &=3/2(\sum_{i=1}^3(A_i+B_i+C_i))+A_3/2+B_3+(C_1+C_2)/2+3C_3/2-(A_1+A_2)/2\\
    &=3(n-2)/2+A_3/2+B_3+(C_1+C_2)/2+3C_3/2-(A_1+A_2)/2.
    \end{align*}

    We now need to establish that $A_1+A_2$ is small enough relative to the other terms that we can still get a lower bound of $3(n-2)/2$.
    Consider the neighbours, $q_1,q_2,\dots,q_m, q_{m+1}:=q_1$ in clockwise order of the vertex corresponding to $v_1$. It is always possible to add an edge between $q_i$ and $q_{i+1}$ without adding a crossing. If $q_i$ is a vertex enumerated by $A_1$ (or $A_3$), then it has no neighbours among the vertices corresponding to $\{v_3,\dots,v_n\}$ which means edges $q_iq_{i+1}$ or $q_iq_{i-1}$ are not present in the drawing (remember the vertex corresponding to $v_2$ is not a neighbour of the vertex corresponding to $v_1$) and we are forbidden from adding them since the drawing is saturated. This means that $q_{i-1}, q_{i+1}$ both already have two neighbours among the vertices corresponding to  $\{v_3,\dots,v_n\}$. Thus as long as $m\ge 2$, the number of neighbours of the vertex corresponding to $v_1$ with two neighbours corresponding to  $\{v_3,\dots,v_n\}$ is at least as large as the number with zero neighbours corresponding to  $\{v_3,\dots,v_n\}$. This gives that $C_1+C_3\ge A_1+A_3$. A similar analysis on the vertex corresponding to $v_2$ and its neighbours gives $C_2+C_3\ge A_2+A_3$. Thus,
    \[
    (C_1+C_2)/2+C_3\ge (A_1+A_2)/2,
    \]

    so $3(n-2)/2+A_3/2+B_3+(C_1+C_2)/2+3C_3/2-(A_1+A_2)/2\ge 3(n-2)/2$, as desired.

    We have left unaddressed the case where the vertex corresponding to $v_1$ (similarly for $v_2$) has a single neighbour. However, this cannot happen, since the vertex corresponding to $v_1$ will be contained inside a face with at least two boundary vertices besides the vertex corresponding to $v_2$. It is then possible to add a new edge from at least one of these vertices to the vertex corresponding to $v_1$, without introducing a crossing. Thus, such a drawing is not saturated.
\end{proof}

Now the proof of Theorem \ref{lowerpsr} is immediate after dividing by $e(G)=3n-6$.

\begin{proposition}\label{construction 3n/2}
    For $n\ge 9$ and odd, the graph discussed in the proof of Theorem \ref{const3n/2} actually has a plane-saturated subgraph of size at most $3n/2+3/2$.
\end{proposition}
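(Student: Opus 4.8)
The plan is to exhibit, for each odd $n\ge 9$, an explicit subgraph $H\subseteq G$ together with a plane drawing, to check $e(H)\le 3n/2+3/2$ by a direct count, and to verify that $H$ is plane‑saturated — i.e.\ that whenever $u,v$ lie on a common face of the drawing and $uv\notin E(H)$, the graph $H+uv$ is not isomorphic to any subgraph of $G$. The central tool is the following structural description, which should be recorded first: since $G$ is the ``double wheel'' obtained by joining the $(n-2)$-cycle to two independent vertices, a graph $F$ on $n$ vertices is (isomorphic to) a subgraph of $G$ if and only if there are two non‑adjacent vertices $a,b$ of $F$ with $F-\{a,b\}$ a disjoint union of paths (or the full cycle $C_{n-2}$). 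This is exactly what one uses to \emph{certify} that some $H+uv$ fails to embed. A second fact to isolate: $v_1,v_2$ are the only vertices of $G$ of degree exceeding $4$, so in any embedding of $H$ the high-degree vertices of $H$ must land on $\{v_1,v_2\}$; the construction will exploit this to pin the embedding down.

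Concretely, I would build $H$ from the $(n-2)$-cycle of $G$ by keeping only a near-perfect matching of cycle-edges, so that the ``rim'' vertices $v_3,\dots,v_n$ are grouped mostly into adjacent pairs, with one triple left over to absorb the parity of $n-2$; each rim vertex is then attached to exactly one of $v_1,v_2$. The attachments and the drawing are chosen so that (i) $v_1$ and $v_2$ remain incident only to their own neighbours — no free spoke can be added inside any face — which requires a constant-size ``closing'' gadget around each of $v_1,v_2$; and (ii) for every face, any pair of rim vertices on its boundary that is not already an edge, if joined, would force either a triangle spanned by three rim vertices or a rim vertex incident to three rim-edges, neither of which can occur in $G$ once $v_1,v_2$ are used up as the two centres. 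Counting roughly $n/2$ kept cycle-edges, roughly $n$ spokes, and $O(1)$ edges in the two closing gadgets yields $e(H)\le 3n/2+3/2$; a figure would make this bookkeeping transparent. That $H\hookrightarrow G$ is then immediate from the labelling.

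The main obstacle — and the reason this is strictly stronger than the labeled statement — is ruling out clever relabellings, exactly as in Figure~\ref{fig:not_plane-sat}: a cofacial non-edge $uv$ may fail to be an edge of $G$ in the obvious labelling yet become one after permuting the roles of several rim vertices. By the structural characterization this reduces to showing that for every cofacial non-edge $uv$ and every non-adjacent pair $\{a,b\}$ in $H+uv$, the graph $(H+uv)-\{a,b\}$ has a vertex of degree $\ge 3$ or more than one cycle. Because the kept cycle-edges are so sparse, almost every such $(H+uv)-\{a,b\}$ is visibly not a linear forest; the delicate cases are $\{a,b\}=\{v_1,v_2\}$, handled by the sparse-matching structure, and the case where $a$ or $b$ is an endpoint of the new edge $uv$, handled by the closing gadgets. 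This last point is where the hypothesis $n\ge 9$ enters: it guarantees enough room in the gadgets that they cannot be ``reabsorbed'' under any relabelling, and correspondingly explains why the bound is $3n/2+3/2$ rather than the lower bound $3n/2-3$. I would organize the verification as a finite case check over the few face-types of the drawing.
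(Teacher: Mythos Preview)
Your plan diverges from the paper's construction in a significant way, and the divergence is exactly where the gap lies.

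The paper does \emph{not} split the rim evenly between the two centres. Instead it builds a highly asymmetric drawing: start with a star on $n-3$ vertices (centre plus $n-4$ leaves), connect every second leaf to the next by a path so that the intermediate leaves sit inside triangular faces, add one extra edge between the last two leaves, and then attach a three-vertex gadget inside one face. The star centre ends up with degree $n-4$, and one gadget vertex has degree exactly $5$; since $G$ has only two vertices of degree $\ge 5$, these two are forced to play the roles of $v_1,v_2$ in \emph{any} embedding. With the embedding pinned, saturation reduces to the single observation that every rim vertex that is cofacial with a non-neighbour among $v_3,\dots,v_n$ already has two rim-neighbours. No relabelling case analysis is needed at all.

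Your symmetric scheme (near-matching on the rim, one spoke per rim vertex split between the two centres, plus $O(1)$ ``closing gadgets'' around each of $v_1,v_2$) is plausible in spirit, but you have not actually exhibited a drawing, and this is where the difficulty hides. To ``close off'' $v_1$ so that no further spoke can be added, the faces incident to $v_1$ must have all their boundary vertices already adjacent to $v_1$; with roughly $(n-2)/2$ neighbours of $v_1$ scattered around, arranging this with only $O(1)$ extra edges while keeping the drawing planar and simultaneously doing the same for $v_2$ is not at all automatic --- a naive attempt forces a cycle through the neighbours of $v_1$, which costs $\Theta(n)$ edges, not $O(1)$. You also need both centres to end up with degree $\ge 5$ to pin the embedding, which is delicate for $n\in\{9,11\}$. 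Your structural characterisation of subgraphs of $G$ is correct and would be the right tool if you did produce a drawing, but as written the construction step is missing, and the paper's asymmetric trick is precisely what makes that step (and the subsequent saturation check) short.
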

\begin{proof}
    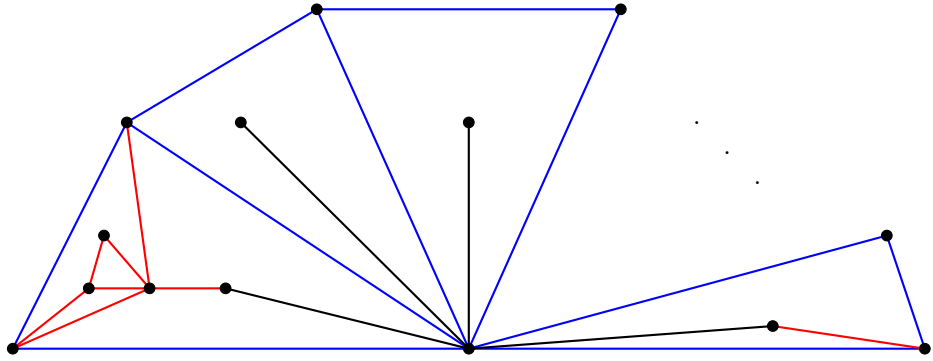
\begin{figure}
    \centering
    \begin{tikzpicture}
    [vertex/.style={circle, draw=black!100, fill=black!100, inner sep=0pt, minimum size=4pt},
    every edge/.style={draw, black!100, thick}]
    
    \draw [blue!100,thick] (0,0)--(-6,0) -- (-4.5,3)--(-2,4.5)--(2,4.5)--(0,0);
    \draw[blue!100,thick](0,0)--(5.5,1.5)--(6,0)--(0,0);
    \draw[blue!100,thick](-4.5,3)--(0,0)--(-2,4.5);

    \draw[black!100,thick](-3.2,0.8)--(0,0)--(-3,3);

    \draw[red!100,thick](-4.2,0.8)--(-3.2,0.8);

    \draw[black!100,thick](0,3)--(0,0)--(4,0.3);
    
    \draw[red!100, thick](4,0.3)--(6,0);

     \draw[red!100,thick](-6,0)--(-4.2,0.8)--(-4.5,3);

     \draw[red!100,thick](-6,0)--(-5,0.8)--(-4.8,1.5)--(-4.2,0.8)--(-5,0.8);

    \node [vertex] at (0,0) {};
     \node [vertex] at (-4.2,0.8) {};

 \node [vertex] at (-5,0.8) {};
  \node [vertex] at (-4.8,1.5) {};
     
    \node [vertex] at (-6,0) {};

    \node [vertex] at (-4.5,3) {};
    \node [vertex] at (-2,4.5) {};
    
    \node [vertex] at (2,4.5) {};
    \node  [vertex] at (5.5,1.5) {};
    \node  [vertex] at (6,0) {};

    \node [vertex] at (-3.2,0.8) {};
    \node [vertex] at (-3,3) {};
    
    \node [vertex] at (0,3) {};
    \node [vertex] at (4,0.3) {};

\node [] at (3,3) {.};
\node [] at (3.4,2.6) {.};
\node [] at (3.8,2.2) {.};

    \end{tikzpicture}    
    \caption{Saturated drawing for Proposition \ref{construction 3n/2}.}
    
    \label{fig:3n/2}
\end{figure}
Consider the following plane subgraph $H$ as shown in Figure~\ref{fig:3n/2}: Beginning with a star on $n-3$ vertices, draw edges from the first leaf to the third leaf, then the third leaf to the fifth leaf, then the fifth leaf to the seventh leaf, continuing all the way to the last leaf to form a path with $(n-5)/2$ edges, where each leaf not part of the new path ends up enclosed in a triangular face formed by the center of the star and the two leaves on either side. Add an additional edge between the last two leaves of the original star. Then add a vertex inside the face formed by the center vertex of the original star and the first and third leaves. Add edges from this new vertex to the first three leaves of the original star. Within the new face formed by this vertex and the first and third leaves of the original star, add another vertex adjacent to the new vertex and the first leaf. Then within the new face formed by the newest two vertices and the first and third leaves, add another new vertex adjacent to the newest two vertices.

    Because $G$ has just two vertices of degree at least $5$, the vertices of $H$ corresponding to $v_1$ and $v_2$ are determined up to swapping. Of the vertices corresponding to $v_3,\dots, v_n$, the only ones which do not already have two neighbours also corresponding to $v_3,\dots,v_n$ cannot have any edges added among them without causing a crossing, so indeed the drawing is saturated. 
    
    The number of blue triangles in Figure \ref{fig:3n/2} is $(n-5)/2$. These triangles contribute $n-4$ edges. There are then an additional $(n-5)/2$ edges incident to the highest degree vertex. There are five additional edges incident to the next highest degree vertex of degree $5$, and three more previously unaccounted for edges for a total of 
    $$(n-4)+(n-5)/2+5+3=3n/2+3/2$$ edges.
\end{proof}

Now we prove Theorem \ref{psrupper}. It is the direct corollary of two lemmas:

\begin{lemma}\label{lem:dorn-dsave}
If a maximal planar graph $G$ on $n$ vertices has maximum degree $d$, then $G$ has a plane-saturated subgraph with at most $3n-6-min(d,n-d-1)$ edges.
\end{lemma}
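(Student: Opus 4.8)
\textbf{Proof proposal for Lemma~\ref{lem:dorn-dsave}.}

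The plan is to produce an explicit plane-saturated subgraph $H$ of $G$ by starting from a star centered at the maximum-degree vertex and then carefully completing it to a saturated drawing while controlling how many edges get used. Let $v$ be a vertex of degree $d$, with neighbours $v_1,\dots,v_d$; by Lemma~\ref{cycle} these neighbours form a cycle $C$ in some cyclic order in $G$, so the wheel $W$ consisting of $v$, the spokes $vv_i$, and the rim cycle $C$ is a plane subgraph of $G$. There are two regimes to treat separately, corresponding to the two terms in $\min(d,n-d-1)$.

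First, the regime where $n-d-1\le d$, i.e.\ $\min(d,n-d-1)=n-d-1$. Here I would \emph{not} draw the full wheel; instead, draw the star $S$ centered at $v$ (just the $d$ spokes $vv_i$). All $n-d-1$ vertices outside $\{v,v_1,\dots,v_d\}$ lie in the single face of $S$, so no edge among them or from them to $v$ can be added without a crossing once we argue carefully; more precisely, I extend $S$ to a saturated subgraph $H$ by greedily adding only edges \emph{not} incident to any vertex outside $N[v]$, i.e.\ only edges of $G$ among $\{v_1,\dots,v_d\}$, and then among those outside vertices as forced. The point is that once the star fixes $v$ at ``the center,'' every one of the $n-d-1$ non-neighbours of $v$ becomes a vertex with no available incident edge, so $H$ is missing at least all $d$ rim-adjacencies that we choose not to add --- wait, that over-counts; the clean statement is: $H$ has at most $e(G)-(n-d-1)$ edges because at least $n-d-1$ edges of $G$ (namely one incident edge per non-neighbour of $v$, chosen disjointly) are absent, and absence of these does not violate saturation because each such vertex is already ``trapped'' by the star. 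The key step is verifying saturation: any missing edge $e\in E(G)\setminus E(H)$ that could be added without a crossing would have to lie in the face of the current drawing containing both its endpoints, and the star construction guarantees the relevant vertices are interior points of a single face so the edge is blocked or already present.

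Second, the regime $d\le n-d-1$, i.e.\ $\min(d,n-d-1)=d$. Here I would draw the full wheel $W$ (which uses $2d$ edges), place all remaining $n-d-1$ vertices together inside \emph{one} inner triangular face of $W$, and then saturate. Because $W$ already separates the plane into $d$ triangular faces plus the outer face, and all outside vertices are crammed into one of them, the $d-1$ \emph{other} inner faces of $W$ are ``complete'': no edge of $G$ with both endpoints on such a triangle's boundary is addable (either it's a spoke/rim edge already in $W$, or it would be a chord $v_iv_j$ that can be drawn in at most one of the two faces it borders). This forces at least $d$ edges of $G$ to be permanently absent from $H$ --- concretely, for each of the $d$ triangular faces, the face that doesn't contain the outside vertices blocks the chord opposite to $v$, and a counting/pigeonhole argument over which chords $v_iv_j$ exist in $G$ gives the bound $e(H)\le 3n-6-d$. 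I should double-check whether it is $d$ or $d-1$ that is saved here and adjust the face into which the outside vertices are placed (choosing it to maximize blocked chords) so the count comes out to exactly $3n-6-\min(d,n-d-1)$.

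The main obstacle I anticipate is the bookkeeping in the second regime: making the ``$d$ blocked edges'' argument airtight requires identifying, for each wheel face, a distinct edge of $E(G)$ that is genuinely unavailable in the final saturated $H$, and these must be $d$ \emph{distinct} edges. Rim edges $v_iv_{i+1}$ are already in $W$, so the blocked edges must be chords $v_iv_j$ of the rim cycle; but $G$ need not have many such chords --- in fact a rim vertex could have degree only $3$. I expect the resolution is to argue that when few chords exist, many rim vertices $v_i$ have small degree, hence those $v_i$ are themselves ``trapped'' once the wheel is drawn (all their incident faces except possibly one are complete), so one can instead charge a distinct \emph{absent edge incident to $v_i$} for each low-degree rim vertex; combining the chord-charges and the low-degree-rim-vertex-charges and checking disjointness yields the full $d$. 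Getting this charging scheme to be injective, and confirming it still works when the two regimes meet at $d\approx n/2$, is where the care is needed; the rest is routine.
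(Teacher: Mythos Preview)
Your approach has a real gap in both regimes, and you are missing the single idea that makes the lemma immediate.

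In your first regime you draw only the star $S$ on $v$ and its $d$ neighbours. But $S$ is a tree and therefore has a single face; nothing is ``trapped'' by it. Any edge of $G$ can be added to the drawing of $S$ without a crossing, so a saturated extension of $S$ may well be a full plane embedding of $G$ with all $3n-6$ edges. The claim that ``every one of the $n-d-1$ non-neighbours of $v$ becomes a vertex with no available incident edge'' is simply false. In your second regime you draw the full wheel $W$ but place all $n-d-1$ remaining vertices in a \emph{single} triangular face of $W$. The only structural leverage this gives is that $v$ already has all $d$ of its $G$-edges drawn, so inside that one crowded triangle the saturated drawing has a face of length $\ge 4$ incident to $v$. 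That is \emph{one} non-triangular face, hence one saved edge, not $d$. You sense this and attempt a chord-charging argument, but as you note, rim vertices may have degree~$3$ in $G$, in which case there are no chords to charge; and your fallback of charging absent edges at low-degree rim vertices does not work either, since after saturating inside the single crowded face there is no reason any particular edge at such a rim vertex must be absent.

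The idea you are missing is to draw the full wheel and then \emph{equidistribute} the $n-d-1$ remaining vertices among its $d$ inner triangular faces, so that exactly $\min(d,\,n-d-1)$ of these triangles are nonempty. Now extend to a saturated drawing $H$ in any way. In each nonempty wheel triangle, $v$ lies on the boundary but already has all $d$ of its $G$-edges present, so $v$ cannot be joined to anything in the interior; hence the face of $H$ incident to $v$ inside that triangle has length at least~$4$. These non-triangular faces lie in disjoint regions and are therefore $\min(d,\,n-d-1)$ distinct faces of $H$, and a plane graph on $n$ vertices with at least $k$ faces of length $\ge 4$ has at most $3n-6-k$ edges by Euler's formula. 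No case split on $d$ versus $n-d-1$ and no charging scheme are needed.
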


\begin{proof}
    Draw a wheel where the central vertex $v$ has degree $d$. This determines that in the embedding, $v$ must be mapped to one of the maximum degree vertices of $G$. Then we equidistribute the remaining vertices between the triangular faces of the wheel.

    In each face that contains at least $1$ vertex inside, there will be a non-triangular face, since $v$ has no edges going inside the triangle. So with each face that contains a vertex, we remove at least $1$ edge from the maximal drawing of $G$. If the max degree is $d$, we have exactly $min(d,n-d-1)$ such faces, proving the theorem.
\end{proof}

If $G$ only has very small degree vertices, or it has vertices of degree almost $n$, the above theorem is not too helpful; for in-between cases it gives a better bound. For example when $d=cn$ for some $0<c<1$ it can give a linear improvement compared to $3n-6$. 

    \begin{lemma}\label{lem:C2save}
Let $0<C_1\leq1$ be a constant, and let $C_2= \frac{C_1^2}{12+C_1}$. Let $n\geq \frac{5}{C_2}$ be an integer. If $G$ is a maximal planar graph on $n$ vertices, and it has a vertex $v$ of degree $deg(v)\geq C_1n$, then $G$ has a plane-saturated subgraph with at most $(3-C_2)n+1$ edges.
\end{lemma}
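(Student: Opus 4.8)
The plan is to choose $v$ to be a vertex of maximum degree, so $d:=\deg v\ge C_1n$, and to argue according to the local structure of $G$ at $v$. By Lemma~\ref{cycle} the set $N(v)$ spans a cycle (the \emph{rim}), so $G=W\cup D$ where $W$ is the $d$-wheel centred at $v$ and $D$ is a triangulated disk whose boundary is the rim and whose interior vertices are the $m:=n-d-1$ vertices outside $\{v\}\cup N(v)$; note $e(D)=2d+3m-3$ and $d+m=n-1$. If $m\ge d+1$, then $\min(d,n-d-1)=d\ge C_1n$, so Lemma~\ref{lem:dorn-dsave} at once gives a plane-saturated subgraph with at most $3n-6-C_1n\le(3-C_2)n+1$ edges; hence I assume $m\le d$ from now on.

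\textbf{Hiding $v$.} Suppose $D$ has a triangular face $\Delta$ whose three vertices $a,b,c$ all lie on the rim. Inside the planar drawing of $D$, place $v$ in $\Delta$ and add the spokes $va,vb,vc$; call the result $H$. It is a planar drawing of a subgraph of $G$ on all $n$ vertices, and the only edges of $G$ it omits are the spokes from $v$ to $N(v)\setminus\{a,b,c\}$. Since the faces of $H$ at $v$ are exactly the triangles $vab,vbc,vca$, none of those spokes can be added, so $H$ is plane-saturated; and $e(H)=e(D)+3=2d+3m=3n-6-(d-3)\le(3-C_1)n-3\le(3-C_2)n+1$.

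\textbf{Distributing the interior vertices.} Suppose instead that $D$ has no rim-only triangular face. Let $c_0$ be the number of edges of $D$ joining two rim vertices. These $c_0$ chords are pairwise non-crossing, so with the $d$ rim edges they cut the disk into $c_0+1$ polygons of total perimeter $d+2c_0$; consequently at least $2c_0-d+4$ of these polygons are triangles. Each such triangular region is bounded only by rim edges and chords, so by our assumption it is not a face of $D$ and must therefore enclose an interior vertex of $D$; as there are only $m$ of those, $2c_0-d+4\le m$, i.e.\ $c_0\le(d+m-4)/2$. Now draw $W$, put the $m$ interior vertices one per triangular face of $W$ (possible since $m\le d$), join each to the at most two rim vertices of its face that are its $G$-neighbours, and draw all $c_0$ chords in the outer face of $W$. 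A direct check shows no edge of $G$ can be added to this $H$: all spokes, rim edges and chords are present, and any remaining $G$-edge either joins two interior vertices lying in different faces or joins an interior vertex to a rim vertex off its face, and so would force a crossing. Thus $H$ is plane-saturated, and $e(H)\le 2d+2m+c_0\le 2d+2m+\tfrac{d+m-4}{2}=3n-6-\tfrac{n-3}{2}\le(3-C_2)n+1$.

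The edge counts are immediate and the saturation verifications are routine; the real content is the case split. When $D$ has an ``empty'' rim-bounded triangle we bury $v$ inside it at a cost of only three edges, saving the $\approx d$ spokes; when it does not, the obstruction $2c_0-d+4\le m$ forces the chords of $D$ to be sparse, so sprinkling the (few) interior vertices across the wheel while being compelled to redraw at most $c_0\le(d+m-4)/2$ chords still saves $\approx(n-1)/2$ edges. I anticipate the only residual chore is to confirm that $n\ge5/C_2$ absorbs the additive slack in these estimates — every inequality invoked is linear in $n$ with the favourable side carrying the larger coefficient, so this is bookkeeping rather than a genuine obstacle.
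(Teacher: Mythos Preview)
Your argument conflates \emph{labeled} plane-saturation with the (unlabeled) plane-saturation that the lemma is about. In both the ``hiding $v$'' case and the ``distributing'' case, your saturation check verifies only that no edge of $G$ can be added to $H$ \emph{under the fixed labeling you used to construct $H$} (``the only edges of $G$ it omits are the spokes\dots'', ``any remaining $G$-edge\dots would force a crossing''). For $H$ to be plane-saturated you must show that for every edge $e$ addable without crossing, $H+e$ is not a subgraph of $G$ under \emph{any} labeling. As a concrete failure mode in your distributing case: nothing prevents two interior vertices $u,u'$ from both being placed in wheel-faces where they happen to have no $G$-neighbours among that face's two rim vertices (indeed if some interior vertex of $D$ has no rim neighbour at all this is unavoidable). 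Then $u$ and $u'$ are isolated in $H$; if $u'r_i\in E(G)$ where $r_i$ bounds $u$'s face, one can add the edge $ur_i$ and relabel by swapping $u\leftrightarrow u'$, exhibiting $H+ur_i$ as a subgraph of $G$. So $H$ was not plane-saturated, and saturating it further destroys your edge count. Analogous relabeling issues threaten the outer-face chords in both constructions.

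This is precisely the obstacle the paper's proof is built to overcome, and it is why the constant $C_2$ enters at all. The paper first locates a vertex $w$ with a \emph{degree gap}: every vertex of strictly smaller degree in fact has degree below $\deg(w)-C_2n$. It then draws a fan of $\deg(w)-C_2n$ spokes at a central vertex, encloses it in a triangle so that this vertex is completely surrounded by triangular faces, and places everything else outside before saturating. The degree gap guarantees that in \emph{every} embedding of the resulting $H$ into $G$, the central vertex must land on something of degree at least $\deg(w)$, so at least $C_2n-1$ edges of $G$ incident to its image are necessarily absent from $H$, independently of the labeling. Your approach has no analogue of this device; the case split and chord-counting are tidy, but without a mechanism that constrains \emph{all} relabelings the saturation claims do not go through.
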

\textbf{Proof:} We demonstrate that if $G$ contains such a vertex $v$, then there exists a vertex $w$ in $G$ such that for any other vertex $w'$ with $\deg(w') < \deg(w)$, the inequality  
$$
\deg(w) - \deg(w') > C_2n
$$  
holds. Suppose, for the sake of contradiction, that no such vertex $w$ exists. Then, we can construct a sequence of vertices $v = v_0, v_1, v_2, \ldots, v_N$ such that  
$$
\deg(v_{i+1}) \geq \deg(v_i) - C_2n \quad \text{for all } i.
$$  
 Additionally, for sufficiently large $n$, we can choose $N$ so that
$$
\deg(v_N) \leq C_2n.
$$  as the graph must have some low degree (at most $5$) vertices.

If we add the number of edges these vertices have, it is upper bounded by $6n-12$, the degree sum of $G$.
\\
$$6n-12\geq\sum_{i=0}^Ndeg(v_i)\geq\sum_{i=0}^Nmax(0,C_1n-iC_2n)\geq \sum_{i=1}^{\lfloor \frac{C_1}{C_2} \rfloor} iC_2n=C_2n\frac{\lfloor \frac{C_1}{C_2} \rfloor(\lfloor \frac{C_1}{C_2} \rfloor+1)}{2}\geq \frac{(\frac{C_1}{C_2}-1)C_1}{2}n.$$

    If $\frac{(\frac{C_1}{C_2}-1)C_1}{2}\geq 6$, then the previous inequality fails. This happens if $$C_2\leq \frac{C_1^2}{12+C_1}$$ which is precisely the case, yielding the desired contradiction.

    Now for the upper bound we use the vertex $w$ whose existence we just proved.

    We draw a vertex with $deg(w)-C_2n$ vertices as neighbours, and we also draw the corresponding part of the cycle around the vertex, that is a path connecting these neighbours in order. By what we just proved, that vertex must have at least $C_2n$ other neighbours in any embedding of $H$ into $G$. 
    Now note that if we pick $2$ vertices next to each other (in a plane drawing of $G$) among the neighbours of $w$, the edge between them lies on two triangles. The one that does not contain $w$ can be drawn in such a way that it contains the existing drawing within its interior. Then just place all the remaining vertices outside of this new triangle, which prevents at least $C_2n-1$ edges from being present in $H$ (the edges between the vertex in $G$ corresponding to the middle vertex and its remaining neighbours, except possibly to $p$).
    See Figure \ref{fig:upper2} for an illustration where the third vertex, $p$, of this triangle is not a neighbour of the central vertex. It is also possible that $p$ is a neighbour of the central vertex and thus already present in the drawing. In that case, drawing just one of the edges from one of the consecutive neighbours to $p$ is sufficient to cut off the central vertex from the outer face.

    Note that the last two lemmas directly imply Theorem \ref{psrupper}. The value $d=\frac{\sqrt{217}-11}{4}n\approx 0.933n$ is precisely the maximum degree where the two bounds have equal strength. For larger $d$, Lemma~\ref{lem:C2save} yields a better bound assuming $n\geq75\geq\frac{5}{C_2}$ from the extremal case, while for smaller $d$, Lemma~\ref{lem:dorn-dsave} does.

    \begin{figure}
    \centering
    \includegraphics[scale=0.5]{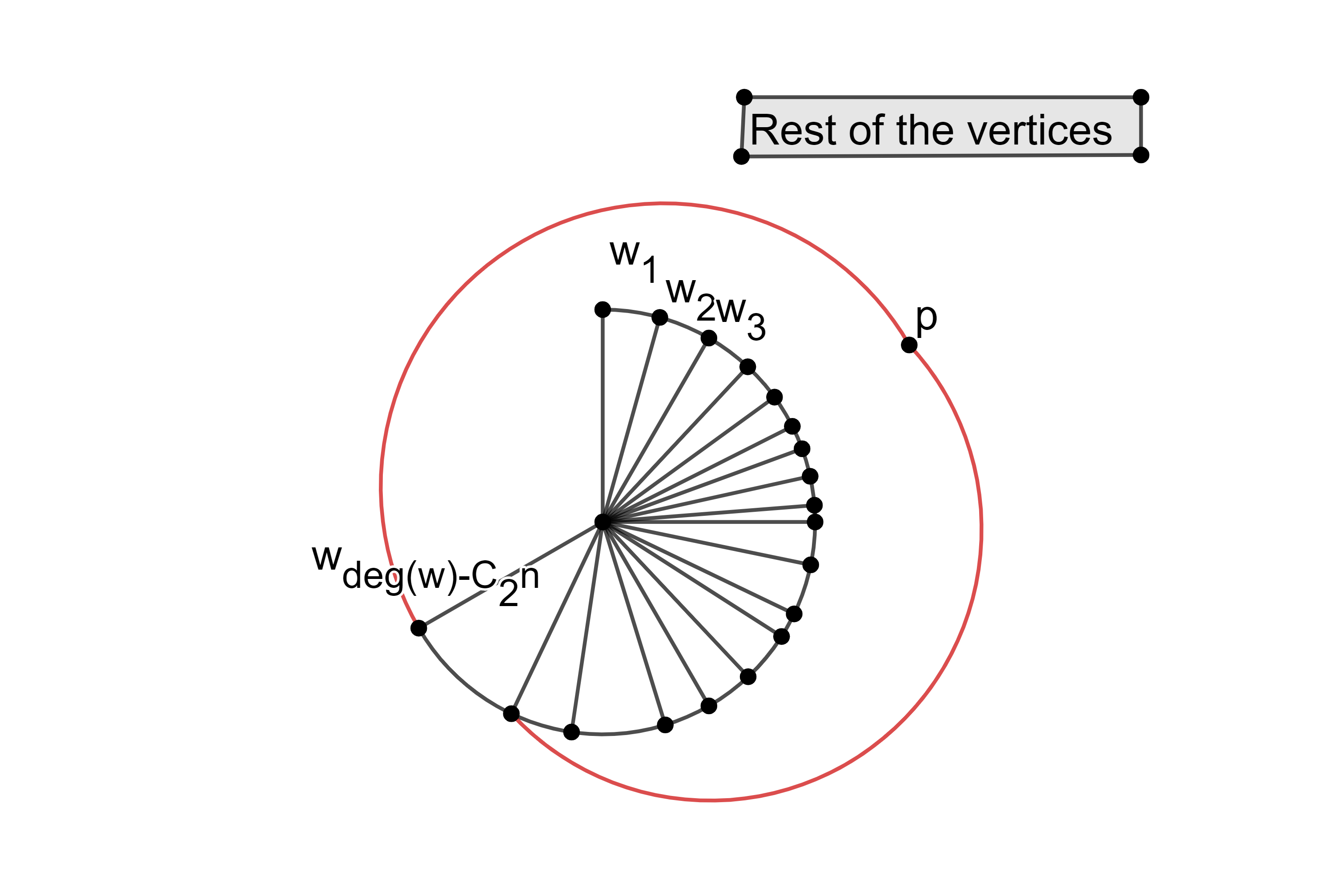}
    \caption{There must be another path of 2 edges between the last two vertices of the drawing, so either we can join the last vertex back to the cycle with an edge, or we can do the same with one extra vertex. We save at least $C_2n$ edges with this drawing since vertex of $G$ corresponding to the middle vertex has degree at least $deg(w)$.}
    \label{fig:upper2}
\end{figure}

\section{Concluding Remarks}\label{sec5}

 For the class of maximal planar graphs, we essentially showed the maximum possible value of the labeled plane-saturation ratio. This is not the case for the plane-saturation ratio, where we only managed to show a lower bound for the maximum. We suspect that this lower bound is asymptotically optimal:
 \begin{conj}
     If $n$ tends to infinity, the maximum value of the plane-saturation ratio over the set of maximal planar graphs $G$ on $n$ vertices tends to $\frac12$.
 \end{conj}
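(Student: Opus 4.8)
The plan is to squeeze the quantity $\max_G psr(G)$ from both sides. The lower half is already in hand: Theorem~\ref{lowerpsr} (via Theorem~\ref{const3n/2}) exhibits, for every $n$, a maximal planar graph $G$ with $psr(G)\ge \frac{3n/2-3}{3n-6}=\frac12$, so $\max_G psr(G)\ge \frac12$ for all $n$. What remains, and what the conjecture really asserts, is the matching universal upper bound: for \emph{every} maximal planar graph $G$ on $n$ vertices,
\[
psr(G)\le \tfrac12+o(1),
\]
equivalently, every triangulation on $n$ vertices contains a plane-saturated subgraph with at most $\left(\tfrac32+o(1)\right)n$ edges. Combining this with the existing construction pins the maximum to $\frac12$, which is exactly the conjecture. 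Thus the entire difficulty is concentrated in producing, for an \emph{arbitrary} triangulation, a saturated drawing using only about $\tfrac32 n$ edges.

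My plan is to take the tight example behind Proposition~\ref{construction 3n/2} as the \emph{target shape}: there the saturated subgraph is a ``fan of triangles'', a path-like spine whose consecutive triangles each trap one extra vertex, spending roughly three edges per two vertices and hence achieving exactly the ratio $\frac12$. The goal is to show that every triangulation contains, as an abstract subgraph, a structure admitting such an efficient locking drawing. Concretely, I would process the triangulation through a canonical ordering or Schnyder-wood decomposition, peeling off vertices one at a time and, for each peeled vertex, enclosing it in a triangular face bounded by already-placed vertices so that it is locked by only two or three edges. The crucial leverage is the freedom of the \emph{unlabeled} problem — the drawing of $H$ need not be the embedding inherited from $G$ — which lets us re-place enclosed vertices into whichever faces minimise the number of forced edges, in the same averaging spirit as Lemma~\ref{lem:C2save} but now aimed at a per-vertex amortised cost of $\tfrac32$ rather than a mere constant-fraction saving. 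As a complementary route I would attempt divide-and-conquer along a balanced simple-cycle separator (Miller's theorem for triangulations): split $G$ along a short cycle $C$ into an inside and an outside of at most $\tfrac23 n$ vertices each, build sparse saturated drawings recursively, and glue them so that $C$ itself blocks any cross edge; the separator then costs only $O(\sqrt n)$ edges and the recursion should solve to $\left(\tfrac32+o(1)\right)n$.

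The main obstacle is that saturation is a \emph{global} certificate: after committing to a sparse drawing one must verify that \emph{no} edge of $G$ can be added anywhere without a crossing (or without leaving the subgraph), and the relabeling freedom makes the set of candidate addable edges hard to control. More seriously, the existing edge-saving tools all rely on a single dominant vertex — Lemma~\ref{lem:dorn-dsave} saves only $\min(d,n-d-1)$ edges, and Lemma~\ref{lem:C2save} requires $\deg(v)\ge C_1 n$ — so they are essentially useless for the genuinely hard case of a \emph{bounded-degree} triangulation (every vertex of degree close to $6$), where no single vertex can lock more than a constant number of faces and Theorem~\ref{psrupper} leaves the bound stuck near $3n-6$. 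For such graphs one must lock $\Theta(n)$ vertices simultaneously, each at amortised cost $\tfrac32$ edges, while maintaining a valid global saturation certificate under relabeling. Devising and analysing such a uniform, near-tight locking scheme — one that tightly couples the edge accounting to the saturation certificate across \emph{all} triangulations, not just those with a high-degree vertex — is the crux, and is precisely why the statement is posed as a conjecture rather than a theorem.
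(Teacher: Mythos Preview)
The statement you are addressing is not a theorem in the paper; it is explicitly stated as a \emph{conjecture} in the concluding remarks (Section~\ref{sec5}), and the paper provides no proof. So there is nothing on the paper's side to compare your proposal against.

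Your write-up is not a proof either, and to your credit you seem fully aware of this: your final paragraph correctly identifies that the existing tools (Lemmas~\ref{lem:dorn-dsave} and~\ref{lem:C2save}, Theorem~\ref{psrupper}) all exploit a high-degree vertex and give nothing for bounded-degree triangulations, and you explicitly conclude that this ``is precisely why the statement is posed as a conjecture rather than a theorem.'' That diagnosis matches the paper's own assessment --- the authors say that ``any upper bound that is linearly better than $3n-6$ \dots\ would be an interesting result.'' So what you have written is a reasonable research outline (canonical orderings, Schnyder woods, cycle separators are all plausible directions), but it is a plan, not an argument: none of the proposed steps is carried out, and the central obstacle you name --- certifying saturation globally under relabeling while spending only $\tfrac32$ edges per vertex in a bounded-degree triangulation --- remains untouched. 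If the assignment was to \emph{prove} the statement, there is a genuine gap: no proof exists here, only a sketch of where one might look.
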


 In fact, any upper bound that is linearly better than $3n-6$ for the number of edges of the smallest plane-saturated subgraph of a maximal planar graph on $n$ vertices would be an interesting result. We could only show such an improvement for a very special class of graphs with maximum degree equal to $cn$ for some $0<c<1$.

 Plane-saturation can also be considered for other classes of graphs besides maximal planar. For example in \cite{clifton} the authors investigated the so called twin-free graphs.

 \begin{problem}
 Investigate the plane-saturation ratio for other special classes of graphs.
 \end{problem}

 One can also try to extend the concept of planar graphs to $k$-planar graphs. A $k$-planar graph is a graph, that can be drawn on the plane in a way that each one of its edges has at most $k$ crossings on it. One can define $k$-plane-saturation on these graph classes. As a start, maybe $1$-planar maximal graphs could be interesting.

\begin{problem}
    Investigate the maximum value of the (labeled) $1$-plane-saturation ratio for maximal $1$-planar graphs.
\end{problem}

\end{document}